\newcommand{\fA}{{\mathfrak{A}}}
\newcommand{\fS}{{\mathfrak{S}}}
\newcommand{\bG}{{\mathbf{G}}}
\newcommand{\cE}{{\mathcal{E}}}
\newcommand{\Aut}{{\operatorname{Aut}}}
\newcommand{\Inn}{{\operatorname{Inn}}}
\newcommand{\Out}{{\operatorname{Out}}}
\newcommand{\Irr}{{\operatorname{Irr}}}
\newcommand{\fix}{{\operatorname{fix}}}
\newcommand{\PGL}{{\operatorname{PGL}}}
\newcommand{\PSL}{{\operatorname{L}}}
\newcommand{\PCSp}{{\operatorname{PCSp}}}
\newcommand{\PSp}{{\operatorname{S}}}
\newcommand{\OO}{{\operatorname{O}}}
\newcommand{\SO}{{\operatorname{SO}}}
\newcommand{\PCO}{{\operatorname{PCO}}}
\newcommand{\PGU}{{\operatorname{PGU}}}
\newcommand{\PSU}{{\operatorname{U}}}
\newcommand{\tw}[1]{{}^{#1}\!}
\newtheorem{thm}{Theorem}[section]
\newtheorem{lem}[thm]{Lemma}
\newtheorem{prop}[thm]{Proposition}
\theoremstyle{remark}
\begin{document}

\title[On the number of $p'$-degree characters]
      {On the number of $p'$-degree characters \\ in a finite group}

\date{\today}

\author{Gunter Malle and Attila Mar\'oti}
\address{FB Mathematik, TU Kaiserslautern, Postfach 3049,
         67653 Kaisers\-lautern, Germany.}
\email{malle@mathematik.uni-kl.de}
\email{maroti@mathematik.uni-kl.de}

\thanks{The first author gratefully acknowledges financial support by ERC
  Advanced Grant 291512. The second author was supported by an Alexander von
  Humboldt Fellowship for Experienced Researchers and by OTKA K84233.}

\keywords{height zero characters, McKay conjecture, simple groups}

\subjclass[2010]{20C15, 20C33}

\begin{abstract}
Let $p$ be a prime divisor of the order of a finite group $G$. Then $G$ has
at least $2 \sqrt{p-1}$ complex irreducible characters of degrees prime to $p$.
In case $p$ is a prime with $\sqrt{p-1}$ an integer this bound is sharp for
infinitely many groups $G$.
\end{abstract}

\maketitle


\section{Introduction}

Let $p$ be a prime and $G$ a finite group. Denote the set of complex
irreducible characters of $G$ whose degrees are prime to $p$ by $\Irr_{p'}(G)$.
The McKay Conjecture states that $|\Irr_{p'}(G)| = |\Irr_{p'}(N_{G}(P))|$
where $N_{G}(P)$ is the normalizer of a Sylow $p$-subgroup $P$ in $G$. Some 
known cases (easy consequence of \cite[Thm.~1]{Dade} and a special case of
\cite{IMN}) of this problem together with a recent result of the second author
\cite{Ma14} stating that the number of conjugacy classes in a finite group $G$
is at least $2 \sqrt{p-1}$ whenever $p$ is a prime divisor of the order of $G$
allows us to prove the following.

\begin{thm}   \label{thm:main}
 Let $G$ be a finite group and $p$ a prime divisor of the order of $G$. Then
 $|\Irr_{p'}(G)| \geq 2 \sqrt{p-1}$.
\end{thm}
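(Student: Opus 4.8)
The plan is to transfer the count from $G$ to the normalizer $N := N_G(P)$ of a Sylow $p$-subgroup $P$, where $P$ becomes normal, and then to recognize the number of $p'$-degree characters of $N$ as an ordinary class number to which Mar\'oti's bound applies directly. First I would invoke the known cases of the McKay conjecture quoted above to obtain $|\Irr_{p'}(G)| = |\Irr_{p'}(N)|$. This replaces $G$ by a group with a normal Sylow $p$-subgroup, which is exactly the situation in which Clifford theory relative to $P$ becomes tractable; in particular, by Schur--Zassenhaus we may write $N = P \rtimes H$ with $H \cong N/P$ a $p'$-group.

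The second step is a character count inside $N$. Every $\chi \in \Irr(N)$ lies over an $H$-orbit of some $\theta \in \Irr(P)$, and since $[N:I_N(\theta)]$ divides $|H|$ while $\theta(1)$ is a power of $p$, the character $\chi$ has $p'$-degree precisely when $\theta$ is linear. For such $\theta$ the inertia quotient $I_N(\theta)/P$ is a $p'$-group, so $\theta$ extends to $I_N(\theta)$ (its determinantal order is a $p$-power, coprime to $|H|$), and Gallagher's theorem together with Clifford induction puts $\Irr_{p'}(N\mid\theta)$ in bijection with $\Irr(\operatorname{Stab}_H(\theta))$. Summing over $H$-orbits of linear characters and using that $\operatorname{Lin}(P) = \Irr(P/[P,P])$ carries the same $H$-action as the abelian group $P/[P,P]$, I would obtain
$$|\Irr_{p'}(N)| = \sum_{[\theta]} k\bigl(\operatorname{Stab}_H(\theta)\bigr) = k\bigl(N/[P,P]\bigr),$$
where $k(\cdot)$ is the class number and the last equality is the standard class count for the split extension $(P/[P,P]) \rtimes H \cong N/[P,P]$. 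Since $P/[P,P]$ is nontrivial, $p$ divides $|N/[P,P]|$, and Mar\'oti's theorem yields $k(N/[P,P]) \geq 2\sqrt{p-1}$, which finishes the estimate.

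The main obstacle is the very first step: the full McKay conjecture is open, so the argument must ensure that the cited partial results (the $p$-solvable/Dade case and the relevant IMN case) actually cover every group needed. Concretely, I expect to reduce to a minimal configuration and check that it falls under one of these verified cases, so that the equality $|\Irr_{p'}(G)| = |\Irr_{p'}(N_G(P))|$ is genuinely available; the Clifford-theoretic formula and the appeal to Mar\'oti are then routine. As a consistency check on sharpness, the Frobenius group $C_p \rtimes C_{\sqrt{p-1}}$ (when $\sqrt{p-1}$ is an integer) is its own Sylow normalizer with abelian $P = C_p$, so $[P,P]=1$ and the formula gives $|\Irr_{p'}(G)| = k(G) = 2\sqrt{p-1}$, matching the extremal examples announced in the abstract.
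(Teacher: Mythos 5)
The central gap is your first step: you need the equality $|\Irr_{p'}(G)| = |\Irr_{p'}(N_G(P))|$ for \emph{every} finite group $G$, i.e.\ the full McKay conjecture, which was open when this theorem was proved and which the paper is explicitly careful not to assume. The partial results actually available cover only groups with cyclic Sylow $p$-subgroups (Dade) and the rank-one families $\PSL_2(q)$, $\tw2B_2(q^2)$, $\tw2G_2(q^2)$ (Isaacs--Malle--Navarro); your hope that a ``minimal configuration'' will always fall under one of these verified cases is exactly what fails. The paper's own minimal-counterexample analysis shows that the residual configurations are monolithic groups whose unique minimal normal subgroup is a product $S_1\times\cdots\times S_t$ of non-abelian simple groups with non-cyclic Sylow $p$-subgroups --- for instance most groups of Lie type with $p$ a non-defining prime and $a_d>1$ --- and for these no McKay-type equality was known. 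This is why the bulk of the paper (the reduction to Theorem~\ref{thm:q} and its verification in Sections~4--5) is an unconditional, classification-based count: one bounds the number $k$ of $\Aut(S)$-orbits on $\Irr_{p'}(S)$ from below ($k\ge 2(p-1)^{1/4}$, so that $\binom{k+1}{2}>2\sqrt{p-1}$ when $t\ge2$) and shows $|\Irr_{p'}(X)|>2\sqrt{p-1}$ for almost simple $X$, using Macdonald's formula for $\fS_n$, the Atlas for sporadic groups, and semisimple characters, extendible unipotent characters and $d$-Harish-Chandra theory for groups of Lie type. None of this work can be bypassed by citing known McKay cases, so as written your argument proves the theorem only \emph{conditionally} on McKay.

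Your second step, by contrast, is correct and essentially reproduces the paper's Section~2. With $N=P\rtimes H$ and $H$ a $p'$-group, the $p'$-degree characters of $N$ are precisely those over linear characters of $P$; the determinantal-order extension argument plus Gallagher then give $|\Irr_{p'}(N)| = \sum_{[\theta]} k(\operatorname{Stab}_H(\theta)) = k(N/[P,P])$, and Mar\'oti's bound applies. (The paper phrases this with the Frattini quotient $G/\Phi(P)$ rather than $N/[P,P]$, and gets the inequality $|\Irr_{p'}(N)|\ge k(N/\Phi(P))\ge 2\sqrt{p-1}$ via Clifford theory, which is the same mechanism; your sharpness check against $C_p\rtimes C_{\sqrt{p-1}}$ likewise matches the paper's equality analysis.) So the proposal correctly handles the normal-Sylow situation, but the genuinely hard content of the theorem --- the unconditional treatment of groups with non-cyclic Sylow $p$-subgroups --- is missing.
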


Our proof of Theorem \ref{thm:main} shows that $|\Irr_{p'}(G)|$ is smallest
possible for a finite group $G$ whose order is divisible by a prime $p$ if
and only if the normalizer of a Sylow $p$-subgroup of $G$ has a certain special
structure. This may be natural in view of the (unsolved) McKay Conjecture.
Our second theorem gives a complete description of finite groups $G$ with the
property that $|\Irr_{p'}(G)| = 2 \sqrt{p-1}$ for a prime divisor $p$ of the
order of $G$, consistent with the McKay conjecture.

\begin{thm}   \label{thm:equality}
 Let $G$ be a finite group, $p$ a prime divisor of the order of $G$, and $P$
 a Sylow $p$-subgroup of $G$. Suppose that $\sqrt{p-1}$ is an integer and set
 $H$ to be the Frobenius group $C_{p} \rtimes C_{\sqrt{p-1}}$ (whose subgroup
 of order $p$ is self centralizing). Then $|\Irr_{p'}(G)| = 2\sqrt{p-1}$ if
 and only if $N_{G}(P) \cong H$.  \par
 Moreover this happens if and only if $G \cong H$, or $O_{p'}(G) = F(G)$, the
 subgroup $F(G)P$ is a Frobenius group, and $G/F(G)$ is either isomorphic to
 $H$ or is an almost simple group $A$ as described below.
\begin{enumerate}
  \item[\rm(1)] $p=5$ and $A=\fA_5$, $\fA_6$, $\PSL_2(11)$ or $\PSL_3(4)$;
  \item[\rm(2)] $p=17$ and $A=\PSp_4(4)$, $\OO_8^-(2)$ or $\PSL_2(16).2$;
  \item[\rm(3)] $p=37$ and $A={}^2G_2(27)$ or $\PSU_3(11).2$;
  \item[\rm(4)] $p=257$ and $A=\PSp_{16}(2)$, $\OO_{18}^-(2)$, $\PSL_2(256).8$, $\PSp_{4}(16).4$,
   $\PSp_{8}(4).2$, $\OO_8^-(4).4$, $\OO_{16}^-(2).2$ or $F_4(4).2$.
\end{enumerate}
\end{thm}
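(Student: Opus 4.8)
The plan is to split the statement into the local equivalence $|\Irr_{p'}(G)| = 2\sqrt{p-1}\Leftrightarrow N_G(P)\cong H$ and the global structural description, and to handle the former by passing to the normalizer $N:=N_G(P)$. Since $P$ is a normal Sylow $p$-subgroup of $N$, Schur--Zassenhaus gives $N=P\rtimes K$ with $K$ a $p'$-group. I would first isolate the purely local statement that $|\Irr_{p'}(N)|\ge 2\sqrt{p-1}$, with equality if and only if $N\cong H$. For the bound, inflation from the quotient $N/[P,P]=(P/[P,P])\rtimes K$ is the key: this quotient has abelian normal Sylow $p$-subgroup, so by Clifford theory all of its irreducible characters have degree dividing $|K|$ and hence prime to $p$; therefore $|\Irr_{p'}(N)|\ge|\Irr(N/[P,P])|=k\bigl((P/[P,P])\rtimes K\bigr)$, the number of conjugacy classes of that group, which is $\ge 2\sqrt{p-1}$ by \cite{Ma14}. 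For the equality clause I would invoke the equality case of \cite{Ma14}, forcing $(P/[P,P])\rtimes K\cong H$; since then $P/[P,P]\cong C_p$ is cyclic, $P$ itself is cyclic, so $P\cong C_p$ and $N\cong H$.

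The bridge between $G$ and $N$ is the McKay conjecture, and the essential simplification is that whenever $N\cong H$ the Sylow subgroup $P\cong C_p$ is cyclic, so that McKay holds unconditionally via Dade's theory of blocks with cyclic defect group (the case of \cite{Dade} and \cite{IMN} alluded to in the introduction). This makes the implication $N_G(P)\cong H\Rightarrow|\Irr_{p'}(G)|=|\Irr_{p'}(H)|=2\sqrt{p-1}$ immediate. For the reverse implication one cannot invoke McKay a priori; instead I would read off from the proof of Theorem~\ref{thm:main}---which, as announced in the introduction, already identifies the equality case---that $|\Irr_{p'}(G)|=2\sqrt{p-1}$ compels the local count $|\Irr_{p'}(N)|$ down to its minimum, whence $N\cong H$ by the local lemma. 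Establishing this reverse implication without circularity, i.e.\ deducing minimality of the local count from minimality of the global one before the defect group is known to be cyclic, is the first genuinely delicate point.

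For the global description I would set $M:=O_{p'}(G)$ and exploit that $N_G(P)\cong H$ forces $C_G(P)=P$, so $P=C_p$ acts fixed-point-freely on $M$. By Thompson's theorem $M$ is nilpotent, hence $M\le F(G)$; excluding the trivial case $P\trianglelefteq G$ (where $G\cong H$) one gets $O_p(G)=1$ and thus $O_{p'}(G)=F(G)$, while $F(G)P=M\rtimes P$ is a Frobenius group with complement $P$. Passing to $\overline G:=G/F(G)$, which satisfies $O_{p'}(\overline G)=1$ and has cyclic Sylow $p$-subgroup $\overline P\cong C_p$, I would analyse the generalized Fitting subgroup: either $\overline P\trianglelefteq\overline G$ and $\overline G\cong H$, or $F^*(\overline G)=E(\overline G)$ is a product of components. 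Using that $\overline P$ has order exactly $p$ (so $p$ divides the order of at most one component, and $\overline P$ lies inside it) together with the minimality $|\Irr_{p'}(G)|=2\sqrt{p-1}$ to rule out further components and extra normal structure, I would reduce to $\overline G$ almost simple with socle $S$, $p\,\|\,|S|$, and $N_{\overline G}(\overline P)\cong H$.

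The main obstacle is the ensuing CFSG-based classification of the almost simple groups $A=\overline G$. The constraints are stringent: $p-1$ must be a perfect square, $\overline P\cong C_p$ must be a full (cyclic) Sylow $p$-subgroup, and the relative automizer $|N_A(\overline P):C_A(\overline P)|$ must equal $\sqrt{p-1}$, with the McKay count---available since the defect group is cyclic---forced to be exactly $2\sqrt{p-1}$. I would run through the alternating, sporadic, and Lie type groups in turn; for the groups of Lie type this amounts to identifying $\overline P$ with a $\Phi_d$-torus of order $p$ and computing the order of its relative Weyl group together with the contribution of diagonal, field, and graph automorphisms, solving $e^2=p-1$ with $p$ a primitive prime divisor value of some $\Phi_d(q)$. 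This cyclotomic bookkeeping is what pins the primes down to $p\in\{5,17,37,257\}$ and produces the explicit lists (1)--(4); checking that no additional $p'$-characters intervene (so that equality is exact) and that the outer-automorphism extensions appearing in the lists genuinely satisfy $N_A(\overline P)\cong H$ is the most laborious part of the argument.
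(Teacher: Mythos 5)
Your proposal is correct and follows essentially the same route as the paper: the local bound via inflation to an abelian-Sylow quotient (you use $N/[P,P]$ where the paper uses $G/\Phi(P)$, an immaterial difference) with Mar\'oti's equality case, Dade's cyclic-defect McKay theorem as the bridge, the strict inequality from the CFSG-based proof of Theorem~\ref{thm:main} to force cyclic Sylow subgroups in the reverse direction, Thompson's theorem for $O_{p'}(G)=F(G)$ and the Frobenius structure, and the same classification of almost simple groups via self-centralizing cyclic tori of order $p=m^2+1$ with automizer of order $m$. The only nitpicks are cosmetic: ruling out extra components of $G/F(G)$ follows from $O_{p'}=1$ and the self-centralizing Sylow of order $p$ rather than from the character count, and once $N_A(\overline P)\cong H$ is verified Dade's theorem makes your final check that the $p'$-character count is exactly $2\sqrt{p-1}$ redundant.
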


In Proposition~\ref{prop:e2} we show that for any prime $p$ with $\sqrt{p-1}$
an integer there are in fact infinitely many finite solvable groups $G$ with
$|\Irr_{p'}(G)| = 2\sqrt{p-1}$. We remark that it is an open problem first
posed by Landau whether there are infinitely many primes $p$ with $\sqrt{p-1}$
an integer (see e.g.~\cite[Sec.~19]{Pintz}).

\section{The McKay Conjecture}

Let $G$ be a finite group and $p$ a prime. The McKay Conjecture claims that
$|\Irr_{p'}(G)| = |\Irr_{p'}(N_{G}(P))|$ where $N_{G}(P)$ is the normalizer
of a Sylow $p$-subgroup $P$ in $G$. Thus if we wish to bound $|\Irr_{p'}(G)|$
and assume the validity of the McKay Conjecture for $G$ and $p$, then we may
assume that the Sylow $p$-subgroup $P$ is normal in $G$. In this case we have
$|\Irr_{p'}(G)| \geq |\Irr_{p'}(G/\Phi(P))|$ where $\Phi(P)$ is the Frattini
subgroup in $P$, a normal subgroup of $G$. Since $P/\Phi(P)$ is an elementary
abelian normal subgroup in $G/\Phi(P)$ which is also the Sylow $p$-subgroup
of $G/\Phi(P)$, by Clifford theory we have that all complex irreducible
characters of $G/\Phi(P)$ have degrees prime to $p$. But the number of
conjugacy classes of $G/\Phi(P)$ is at least $2 \sqrt{p-1}$ by
\cite[Thm.~1.1]{Ma14} with equality if and only if $\sqrt{p-1}$ is an integer
and $G/\Phi(P)$ is the Frobenius group $C_{p} \rtimes C_{\sqrt{p-1}}$ (whose
subgroup of order $p$ is self centralizing).

Now let us suppose that the McKay Conjecture is true for a finite group $G$
and a prime $p$. Then $|\Irr_{p'}(G)| = 2 \sqrt{p-1}$ if and only if the same
holds in case $G$ contains a normal Sylow $p$-subgroup $P$. By the previous
paragraph, $|P/\Phi(P)| = p$ so $P$ is cyclic. But then, by Clifford theory
once again, all complex irreducible characters of $G$ have degrees prime
to~$p$. Finally, by \cite[Thm. 1.1]{Ma14}, the number of conjugacy classes of
$G$ is equal to $2 \sqrt{p-1}$ if and only if $G$ is the Frobenius group
$C_{p} \rtimes C_{\sqrt{p-1}}$.

By the previous two paragraphs we showed Theorem~\ref{thm:main} and the first
half of Theorem \ref{thm:equality} in case the McKay Conjecture is true for
the pair $G$ and $p$. The McKay Conjecture is known to be true, for example,
for groups with a cyclic Sylow $p$-subgroup, by Dade \cite[Thm.~1]{Dade}.

\section{Reduction}
In this section we prove a reduction of Theorem \ref{thm:main} and of the
first half of Theorem \ref{thm:equality} to a question on finite non-abelian
simple groups.

Let $G$ be a finite group and $p$ a prime dividing the order of $G$. By the
previous section we can assume that the
Sylow $p$-subgroups of $G$ are not cyclic. So we would like to show
$|\Irr_{p'}(G)| > 2 \sqrt{p-1}$ in all remaining cases.

From the well-known identity $|G| = \sum_{\chi \in \Irr(G)} {\chi(1)}^{2}$
we see that $|\Irr_{p'}(G)| > 2 \sqrt{p-1}$ is true for $p=2$ and $p=3$.
So assume from now on that $p \geq 5$.

\subsection{Reduction to the monolithic case}

Let $G$ be a minimal counterexample to the bound, that is,
$|\Irr_{p'}(G)| \leq 2 \sqrt{p-1}$ and $G$ does not
have a cyclic Sylow $p$-subgroup.

Let $N$ be a minimal normal subgroup in $G$. Suppose first that $|G/N|$ is
divisible by $p$. Then $|\Irr_{p'}(G)| \geq |\Irr_{p'}(G/N)| \geq 2 \sqrt{p-1}$
by the minimality of $G$. So both inequalities must be equalities. But then
$G/N$ has a Sylow $p$-subgroup of order $p$ and $p^{2}$ divides
$$\sum_{\chi \in \Irr(G) \setminus \Irr(G/N)} {\chi(1)}^{2} = |G| - |G/N|.$$
This implies that $p^2$ cannot divide $|G|$ (only $p$). But we excluded the
case when $G$ has a cyclic Sylow $p$-subgroup.

So we must have that $|G/N|$ is not divisible by $p$, whence $|N|$ is
divisible by $p$. Then $N$ is an elementary abelian $p$-group or is a direct
product of simple groups $S$ having order divisible by $p$. By this argument
it also follows that $N$ is the unique minimal normal subgroup of $G$. If $N$
is abelian then $\Irr_{p'}(G) = \Irr(G)$ by Clifford theory and so we get the
result by \cite[Thm.~1.1]{Ma14}.

Thus $N = S_1 \times \cdots \times S_{t}$ where all $S_{i}$'s are isomorphic
to a non-abelian simple group $S$ having order divisible by $p$. Note that
$G/N$ permutes the simple factors transitively (but not necessarily faithfully).

\subsection{Reduction to simple groups}

We continue the investigation of a minimal counterexample $G$ as in the
previous subsection.
If $\psi \in \Irr_{p'}(N)$ then any irreducible character of $G$ lying above
$\psi$ has $p'$-degree by Clifford theory.

We wish to give a lower bound for the number of $G/N$-orbits on the set
$\Irr_{p'}(N)$. For this we may assume that $G/N$ is as large as possible,
subject to our conditions. So we may assume that $G = A \wr T$ where
$\Inn(S) \leq A \leq \Aut(S)$ is a group for which $|A/\Inn(S)|$ is
prime to $p$ and $T$ is a transitive permutation group on $t$ letters with
$|T|$ coprime to $p$ (but we may and will take $T$ to be $\fS_t$). Let $A_1$
be the stabilizer of $S_1$ in $G$. Let $K_1$ be the normal subgroup of
$A_1$ consisting of those elements which induce inner automorphisms on
$S_1$. Then $A_1/K_1$ can be considered as a $p'$-subgroup of
$\Out(S_1)$. Let $k$ be the number of $A_1$-orbits on $\Irr_{p'}(S_1)$.
Then $|\Irr_{p'}(G)| \geq \binom{k+t-1}{t}$.

Suppose for a moment that $t \geq 2$. Then $|\Irr_{p'}(G)| \geq \binom{k+1}{2}
= k(k+1)/2$. We want this to be larger than $2\sqrt{p-1}$. This is certainly
true if $k \geq 2 {(p-1)}^{1/4}$. On the other hand for $t=1$ we have $G = A$
and so we need $|\Irr_{p'}(G)| > 2\sqrt{p-1}$.

Thus Theorem \ref{thm:main} and the first part of Theorem \ref{thm:equality} is a consequence of the following result.

\begin{thm}   \label{thm:q}
 Let $S$ be a finite non-abelian simple group whose order is divisible by a
 prime $p$ at least $5$. Suppose that $S$ is not isomorphic to a projective
 special linear
 group $\PSL_2(q)$, a Suzuki group $\tw2B_2(q^2)$ or a Ree group $^2G_2(q^2)$.
 Let $X \leq \Aut(S)$ be a group containing $\Inn(S)$ so that $|X/\Inn(S)|$
 is not divisible by $p$. Furthermore let $k$ be the number of $X$-orbits on
 $\Irr_{p'}(S)$. Then
 \begin{enumerate}
  \item[\rm(a)] $k \ge 2 {(p-1)}^{1/4}$; and
  \item[\rm(b)] if the Sylow $p$-subgroups of $X$ are not cyclic then
   $|\Irr_{p'}(X)| > 2 \sqrt{p-1}$.
\end{enumerate}
\end{thm}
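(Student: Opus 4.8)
The plan is to prove Theorem~\ref{thm:q} by invoking the classification of finite simple groups and treating the alternating groups, the sporadic groups (together with the Tits group), and the groups of Lie type separately, in each case producing a lower bound for the number $k$ of $X$-orbits on $\Irr_{p'}(S)$ and, under the non-cyclic hypothesis in (b), for $|\Irr_{p'}(X)|$ itself. The two estimates are linked by Clifford theory: since $|X/\Inn(S)|$ is prime to $p$, every irreducible constituent over $S$ of a $p'$-degree character of $X$ has $p'$-degree, and conversely every $X$-orbit on $\Irr_{p'}(S)$ lies under at least one member of $\Irr_{p'}(X)$, whose degree is $\chi(1)$ times a divisor of $|X:\Inn(S)|$ and hence prime to $p$. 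Thus $|\Irr_{p'}(X)| \ge k$, so it suffices for (b) to sharpen the orbit count to exceed $2\sqrt{p-1}$ when the Sylow $p$-subgroups are non-cyclic.

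I would dispose of the two easy families first. For a sporadic group (or the Tits group) only finitely many primes $p \ge 5$ divide $|S|$, and the character degrees together with the action of the tiny outer automorphism group are available from the known character tables, so (a) and (b) reduce to a finite verification. For $S = \fA_n$ the prime satisfies $p \le n$, and the $p'$-degree irreducible characters are indexed by partitions of $n$ whose hook-length degree avoids $p$; I would bound the number of such partitions from below combinatorially, for instance through partitions with a prescribed $p$-core, whose number grows rapidly in $n$. Since $|\Out(\fA_n)| \le 4$, the orbit count $k$ loses only a bounded factor. The non-cyclic condition in (b) forces $\lfloor n/p\rfloor \ge 2$, that is $n \ge 2p$, which makes the supply of partitions large enough for the stronger bound.

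The groups of Lie type form the core of the argument, and I would split them according to whether $p$ is the defining characteristic. In defining characteristic the $p'$-degree irreducible characters are exactly the semisimple characters, whose number is of the order of $q^{\ell}$ with $\ell$ the rank; as $q$ is a power of $p$ this dwarfs $2\sqrt{p-1}$, and the bounded field and graph automorphism fusion leaves (a) and (b) intact. The real work is the non-defining case. Here I would let $e$ be the multiplicative order of $q$ modulo $p$, so that $p \mid \Phi_e(q)$ and hence $p-1 < \Phi_e(q) \le (q+1)^{\varphi(e)}$ with $\varphi(e)$ bounded by the rank. The $p'$-degree characters I would count are the unipotent characters lying in the principal $p$-block, which by the theory of $e$-Harish-Chandra series are parametrized by $\Irr(W_e)$, where $W_e$ is the relative Weyl group of a Sylow $\Phi_e$-torus, a complex reflection group determined by the type and by the $e$-rank $a_e$. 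Matching $|\Irr(W_e)|$ against $(q+1)^{\varphi(e)}$ across the finitely many Lie types yields (a), while the hypothesis of (b) translates into $a_e \ge 2$, which enlarges $W_e$ enough to push the count past $2\sqrt{p-1}$.

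The principal obstacle will be the non-defining case at small rank and large $e$, where $\Phi_e(q)$, and hence the admissible prime $p$, is comparatively large while $W_e$ stays small, so the generic estimate of $|\Irr(W_e)|$ against $(q+1)^{\varphi(e)}$ is tight or fails. For these configurations I would argue more precisely, using Lusztig's Jordan decomposition to add the $p'$-degree characters in non-principal series attached to centralizers with the right $e$-structure, and I would isolate the genuinely small cases—the groups neighbouring the excluded families $\PSL_2(q)$, $\tw2B_2(q^2)$ and $\tw2G_2(q^2)$, such as $\PSL_3$, $\PSU_3$, $\PSp_4$, $G_2$ and $\tw3D_4$—for individual treatment from their explicit degree lists, checking in particular that the $p'$-automorphism group $X/\Inn(S)$ cannot fuse the relevant characters into too few orbits.
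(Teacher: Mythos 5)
Your proposal follows essentially the same route as the paper's proof: the same CFSG case division, the Clifford-theoretic observation that $|\Irr_{p'}(X)|\ge k$ because $X/\Inn(S)$ has $p'$-order, Macdonald/Olsson-style counting of $p'$-degree characters of $\fA_n$ via $p$-core combinatorics, Atlas checks for sporadic groups, the semisimple-character count of order $q^{\ell}$ in defining characteristic, and in non-defining characteristic the principal-block unipotent characters parametrized by $\Irr(W_e)$ for the relative Weyl group of a Sylow $\Phi_e$-torus (with non-cyclic Sylow corresponding to $a_e\ge2$), supplemented by semisimple characters from Jordan decomposition in the tight small-rank, large-$e$ configurations exactly where the paper invokes them. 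The plan is sound and contains no substantive divergence from the published argument.
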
		

Note that we may exclude the rank~1 groups $\PSL_2(q)$, $\tw2B_2(q^2)$ and
$^2G_2(q^2)$ in Theorem~\ref{thm:q}. Indeed, by Theorems~A and B and by the
comments in between on page~35 of \cite{IMN}, we see that the McKay Conjecture
is true for any corresponding $G$. So we may as well assume that $S$ is
different from these groups.

Note that if $X$ is as in Theorem~\ref{thm:q} then it is sufficient (but not
necessary) to show that $|\Irr_{p'}(X)| > 2 \sqrt{p-1} \cdot |X/S|$.

\section{Alternating and sporadic simple groups}
The aim of this section is to prove Theorem~\ref{thm:q} for alternating and
sporadic groups.

\subsection{The case when $S = \fA_n$}

Let us exclude the case $n=6$ from the discussion below because in this case
the full automorphism group of $S$ is not $\fS_n$.

We begin with a result of Macdonald (the following form of which
can be found in a paper by Olsson \cite{Olsson}). For a non-negative integer
$m$ let $\pi(m)$ denote the number of partitions of $m$. An $m$-split of a
non-negative integer $s$ is a sequence of non-negative integers
$(s_1, \ldots, s_{m})$ so that $\sum_{i=1}^{m} s_{i} = s$. Put
$k(m,s) = \sum \pi(s_1) \pi(s_{2}) \cdots \pi(s_{m})$ where the sum is over
all $m$-splits of $s$. (Notice that $k(m,0) = 1$.) For a prime divisor $p$
of $|\fS_n|$ let the $p$-adic expansion of the integer $n$ be
$a_{0} + a_1p + \cdots + a_{r}p^{r}$. Then Macdonald's result states that
$$|\Irr_{p'}(\fS_n)| = k(1,a_{0}) k(p,a_1) \cdots k(p^{r},a_{r}).$$
Notice that $m \cdot s \leq k(m,s)$ for all $m$ and $s$. This gives
$p - 1 \leq n- 1 \leq |\Irr_{p'}(\fS_n)|$ since the product of integers each
at least $2$ is always at least their sum. Thus
$$|\Irr_{p'}(\fA_n)| \geq k \geq (n-1)/2 \geq (p-1)/2.$$
A simple calculation shows that this is larger than $2 \sqrt{p-1}$ unless
$p \leq 17$. So we may assume that $5 \leq p \leq 17$, otherwise we are done.
But the same calculation can be applied using $n$ in place of $p$. So we may
also assume that $n \leq 17$.

If $a_{0} \geq 3$ or if $a_1 \geq 2$ or if $a_{i} \geq 1$ for some $i\geq 2$,
then $|\Irr_{p'}(\fS_n)| \geq 3p$. Using this bound and the calculation
referred to in the previous paragraph we get an affirmative answer to the
problem. So only the following cases are to be considered.

\begin{enumerate}
\item $n= p = 5$, $7$, $11$, $13$, $17$. In this case $|\Irr_{p'}(\fS_n)| = p$.

\item $n= p+1 = 8$, $12$, $14$. In this case $|\Irr_{p'}(\fS_n)| = p$.

\item $n= p+2 = 7$, $9$, $13$, $15$. In this case $|\Irr_{p'}(\fS_n)| = 2p$.
\end{enumerate}
	
For all the above values of $n$ and $p$ still to be considered (even for $n=6$)
we have that a Sylow $p$-subgroup of $X$ has order $p$, that is, is cyclic.
So we only have to bound $k$. 		
		
In the exceptional cases (1)--(3) above we certainly have $k \geq (p+1)/2$
since $p$ is odd. But then the bound in~(a) of Theorem~\ref{thm:q}
holds for $p \geq 5$.

Now suppose that $n = 6$. It is sufficient to
show in this case that $k \geq 2 {(p-1)}^{1/4}$ (where $p$ here is $5$). Since
the complex irreducible character degrees of $\fA_6$ are $1$, $5$, $5$, $8$,
$8$, $9$, $10$, we certainly have $k \geq 3$. But $3$ is larger than our
proposed bound.

\subsection{The case when $S$ is sporadic}

For sporadic groups and $\tw2F_4(2)'$ it is straightforward to check the
validity of the conditions in Theorem~\ref{thm:q} from the known character
tables in \cite{Atl}.

\section{Groups of Lie type}

Here, we prove Theorem~\ref{thm:q} for groups of Lie type. Let $G=\bG^F$ be
the group of fixed points under a Steinberg endomorphism $F$ of a simple
algebraic group $\bG$ of adjoint type over an algebraically closed field of
characteristic $r$. Let $p$ be a prime (which may coincide with $r$)
dividing $|G|$. Let $S$ be the simple socle of $G$.

\subsection{Two easy observations}

As above, $G$ is a finite reductive group of adjoint type.

\begin{lem}   \label{lem:adjoint}
 Suppose that $p$ does not divide $|G/S|$. Then the
 claim of Theorem~\ref{thm:q} holds for $(S,p)$ if
 $2 \sqrt{p-1} \cdot |\Out(S)|_{p'} < |\Irr_{p'}(G)|$.
\end{lem}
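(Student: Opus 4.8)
The plan is to reduce both parts of Theorem~\ref{thm:q} to a single lower bound on $k$, and then to extract that bound from the hypothesis by a Clifford-theoretic orbit count. Throughout I identify $G=\bG^F$ with the inner-diagonal automorphism group of $S$, so that $S=\Inn(S)$ and $D:=G/S$ is the group of diagonal outer automorphisms; it is normal in $\Out(S)$, and by hypothesis $p\nmid|D|$. For the given $X$ put $A:=X/\Inn(S)\le\Out(S)$, a $p'$-subgroup. Two elementary remarks dispose of the overhead. First, since $2\sqrt{p-1}\ge 2(p-1)^{1/4}$, part~(a) follows as soon as $k>2\sqrt{p-1}$. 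Second, because $p\nmid[X:S]$, any irreducible character of $X$ over some $\theta\in\Irr_{p'}(S)$ has $p'$-degree, and each such character lies over exactly one $X$-orbit; choosing one character per orbit yields an injection, so $|\Irr_{p'}(X)|\ge k$ and part~(b) also follows from $k>2\sqrt{p-1}$. Thus it suffices to prove $k>2\sqrt{p-1}$.

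First I would count $D$-orbits on $\Irr_{p'}(S)$. Since $p\nmid|D|=[G:S]$, Clifford theory shows that an irreducible character of $G$ has $p'$-degree if and only if it lies over $\Irr_{p'}(S)$; moreover the number of members of $\Irr(G)$ over a fixed $\theta$ is at most $|I_G(\theta)/S|\le|D|$. Writing $d$ for the number of $D$-orbits on $\Irr_{p'}(S)$, this gives $|\Irr_{p'}(G)|\le|D|\cdot d$, that is, $d\ge|\Irr_{p'}(G)|/|D|$.

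The heart of the argument is to pass from $d$ to $k$ without losing a second full factor of $|\Out(S)|_{p'}$. Because $D\trianglelefteq\Out(S)$, the group $A$ permutes the set of $D$-orbits, and the quotient map $\Irr_{p'}(S)\to\{D\text{-orbits}\}$ is $A$-equivariant and surjective; hence the number of $A$-orbits upstairs is at least the number of $A$-orbits downstairs. As $D$ fixes every $D$-orbit, the action downstairs factors through $\bar A:=AD/D\le\Out(S)/D$, so the number of $A$-orbits on the $d$ points is at least $d/|\bar A|$. Therefore
\[
 k \ \ge\ \frac{d}{|\bar A|}\ \ge\ \frac{|\Irr_{p'}(G)|}{|D|\,|\bar A|}.
\]
Now $|\bar A|$ divides $|\Out(S)/D|_{p'}$ and $|D|$ is the whole $p'$-part of $D$, so $|D|\,|\bar A|\le|D|\cdot|\Out(S)/D|_{p'}=|\Out(S)|_{p'}$. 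Combining with the hypothesis $|\Irr_{p'}(G)|>2\sqrt{p-1}\,|\Out(S)|_{p'}$ yields $k>2\sqrt{p-1}$, as required.

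I expect the delicate step to be this last orbit comparison. The naive route, bounding $k$ below by $|\Irr_{p'}(S)|/|A|$ and then bounding $|\Irr_{p'}(S)|$ below through $G$, loses the factor $|\Out(S)|_{p'}$ twice and is too weak to match the single factor in the hypothesis. The device that saves a factor is the factorisation $|\Out(S)|_{p'}=|D|\cdot|\Out(S)/D|_{p'}$ of the relevant $p'$-part into its diagonal and its field--graph contributions: the diagonal part is absorbed when passing from $|\Irr_{p'}(G)|$ to $D$-orbits, while the field--graph part accounts for the orbit shrinkage under $A$. The only facts about groups of Lie type entering the proof are that $\bG^F$ realises the inner-diagonal automorphisms of $S$ and that $D=G/S$ is normal in $\Out(S)$; everything else is general Clifford theory.
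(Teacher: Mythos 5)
Your proof is correct, and it reaches the same key inequality $k\ge |\Irr_{p'}(G)|/|\Out(S)|_{p'}$ as the paper, but by a genuinely different mechanism. The paper first replaces $X$ by a larger group containing $G$: since $\Out(S)$ is solvable (Schreier's conjecture) and both $X/S$ and $G/S$ are $p'$-groups, with $G/S$ normal in $\Out(S)$, Hall's theorem for solvable groups places both inside a common Hall $p'$-subgroup, and enlarging $X$ only decreases $k$. With $G\le X$ it then applies the Cauchy--Frobenius orbit-counting lemma: $|\Irr_{p'}(G)|/[G:S]$ is at most the number of $G$-orbits on $\Irr_{p'}(S)$, i.e. $\frac{1}{|G|}\sum_{g\in G}|\fix(g)|$, and since fixed-point counts are nonnegative,
$$\frac{|G|}{|X|}\cdot\frac{1}{|G|}\sum_{g\in G}|\fix(g)|\ \le\ \frac{1}{|X|}\sum_{g\in X}|\fix(g)|\ =\ k,$$
which unwinds to $2\sqrt{p-1}<|\Irr_{p'}(G)|/[X:S]\le k$. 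You instead keep $X$ arbitrary and exploit the normality of $D=G/S$ (the diagonal outer automorphisms) in $\Out(S)$: your $A$-equivariant surjection from $\Irr_{p'}(S)$ onto the set of $D$-orbits, combined with the factorization $|\Out(S)|_{p'}=|D|\cdot|\Out(S)/D|_{p'}$, replaces both Schreier's conjecture and Hall's theorem. Each route buys something: the paper's averaging argument is shorter and needs nothing about the position of $D$ inside $\Out(S)$ beyond solvability of the latter, while yours avoids the enlargement step and Schreier's conjecture altogether, at the cost of invoking the (standard, for groups of Lie type) fact that the inner-diagonal automorphisms induce a normal subgroup of $\Out(S)$. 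Your preliminary reduction --- both parts (a) and (b) follow from $k>2\sqrt{p-1}$, because $p\nmid[X:S]$ forces every irreducible character of $X$ lying over $\Irr_{p'}(S)$ to have $p'$-degree --- matches what the paper uses implicitly, and your two counting steps (at most $[I_G(\theta):S]\le|D|$ characters of $G$ over each fixed $\theta$, and orbits on the set of $D$-orbits of size at most $|\bar A|$ with $|\bar A|$ dividing $|\Out(S)/D|_{p'}$) are all sound.
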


\begin{proof}
By the condition on $G$, by Schreier's conjecture, and by Hall's theorem, we
may assume that $X$ contains $G$. Now
$2 \sqrt{p-1} \cdot |\Out(S)|_{p'} < |\Irr_{p'}(G)|$ implies that
$2 \sqrt{p-1} \cdot |X/S| < |\Irr_{p'}(G)|$. From this we have
$$2 \sqrt{p-1} < \frac{|G|}{|X|} \cdot \frac{|\Irr_{p'}(G)|}{|G:S|}
  \leq \frac{|G|}{|X|} \cdot \Big( \frac{1}{|G|} \sum_{g \in G} |\fix(g)|\Big)
  \leq \frac{1}{|X|} \sum_{g \in X} |\fix(g)| = k$$
where $|\fix(g)|$ denotes the number of fixed points of $g \in X$ on
$\Irr_{p'}(S)$.
\end{proof}

Here is a further easy sufficient criterion:

\begin{lem}   \label{lem:easy}
 Let $S$ be non-abelian simple. Assume that there is $I\subseteq\Irr_{p'}(S)$
 such that all $\chi\in I$ are $\Out(S)$-invariant and extend to $\Aut(S)$.
 Then the conclusion of Theorem~\ref{thm:q} holds for $(S,p)$ if one of the
 following conditions holds:
 \begin{enumerate}
 \item[\rm(1)] $p<|I|^2/4+1$, or
 \item[\rm(2)] Sylow $p$-subgroups of $\Aut(S)$ are cyclic and $p\le|I|^4/16+1$.
 \end{enumerate}
\end{lem}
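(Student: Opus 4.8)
The plan is to exploit the structure that the set $I$ provides. Since each character in $I$ is $\Out(S)$-invariant and extends to $\Aut(S)$, every such character is fixed by the action of $X$ on $\Irr_{p'}(S)$. Consequently the number $k$ of $X$-orbits on $\Irr_{p'}(S)$ satisfies $k \ge |I|$, because these characters form $|I|$ distinct singleton orbits. To establish part~(a) of Theorem~\ref{thm:q} under hypothesis~(1), I would simply observe that $p < |I|^2/4 + 1$ rearranges to $|I| > 2(p-1)^{1/2} \cdot \tfrac{1}{|I|}$\dots more directly, the inequality $p-1 < |I|^2/4$ gives $2(p-1)^{1/4} < |I|^{1/2}\sqrt{2}$, so I must be careful: the cleaner route is to note that $p \le |I|^2/4 + 1$ is equivalent to $2\sqrt{p-1} \le |I|$, and hypothesis~(1) gives the strict version $2\sqrt{p-1} < |I| \le k$. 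Wait, that already proves more than~(a); let me reconsider: $p < |I|^2/4+1$ means $4(p-1) < |I|^2$, i.e. $2\sqrt{p-1} < |I|$. Since $(p-1)^{1/4} \le (p-1)^{1/2}$ for $p \ge 2$, a fortiori $2(p-1)^{1/4} < |I| \le k$, which is exactly~(a).

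For part~(b) under hypothesis~(1) the situation is more delicate, since~(b) only applies when the Sylow $p$-subgroups of $X$ are noncyclic, and we want a bound on $|\Irr_{p'}(X)|$ rather than on $k$. The key idea I would use is that each $\Out(S)$-invariant character $\chi \in I$ that extends to $\Aut(S)$ also extends to $X$ (by Gallagher/Clifford theory, since $X/S$ is the relevant quotient and $\chi$ is $X$-invariant), and each such extension is again of $p'$-degree. The number of extensions of a fixed $\chi$ to $X$ equals $|X/S|$, but to count irreducible characters of $X$ of $p'$-degree I would instead argue that the $|I|$ invariant characters, together with the trivial character and the interplay with $\Irr_{p'}(X/S)$, force enough characters. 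Here I expect to invoke the observation at the end of Section~3 that it suffices to show $|\Irr_{p'}(X)| > 2\sqrt{p-1} \cdot |X/S|$, combined with the fact that the $|I|$ extendible characters each yield at least one $p'$-degree character of $X$ lying above them.

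Under hypothesis~(2), where the Sylow $p$-subgroups of $\Aut(S)$ are cyclic, part~(b) is vacuous (noncyclic Sylow $p$-subgroups cannot occur inside $X \le \Aut(S)$), so only~(a) needs proof, and we are handed the weaker hypothesis $p \le |I|^4/16 + 1$. This rearranges to $16(p-1) \le |I|^4$, i.e. $2(p-1)^{1/4} \le |I|^{1}$\dots precisely $2(p-1)^{1/4} \le |I| \le k$, giving~(a) directly. The reason the exponent can be weakened from~(1) to~(2) is exactly that we no longer need the stronger $k \ge 2\sqrt{p-1}$ bound (which~(b) would implicitly demand): when Sylow $p$-subgroups are cyclic, only the fourth-root bound~(a) is at stake, so a smaller $|I|$ suffices.

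The main obstacle, I expect, will be part~(b) under hypothesis~(1): turning the count $k \ge |I|$ on orbits of $\Irr_{p'}(S)$ into a genuine lower bound on $|\Irr_{p'}(X)|$ that beats $2\sqrt{p-1}$. The cheap bound $|\Irr_{p'}(X)| \ge |\Irr_{p'}(X/S)|$ does not involve $I$ at all, so the real content is in controlling the extensions of the characters in $I$. I would handle this via Gallagher's theorem: since each $\chi \in I$ extends to $X$, the characters $\{\beta \cdot \hat\chi : \beta \in \Irr(X/S)\}$ are irreducible, distinct, and of $p'$-degree provided $\beta(1)$ is prime to $p$, so the count $|\Irr_{p'}(X)| \ge |I| \cdot |\Irr_{p'}(X/S)|$ should follow once one checks that distinct $\chi$ give disjoint sets. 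Reconciling this product bound with hypothesis~(1) is where the arithmetic must be done carefully, and verifying the extendibility and disjointness claims rigorously is the technical heart of the argument.
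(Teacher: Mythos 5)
Your proposal follows essentially the same route as the paper: the $|I|$ invariant characters give $k\ge|I|$ singleton orbits, the arithmetic $p<|I|^2/4+1 \Leftrightarrow |I|>2\sqrt{p-1}\ge 2(p-1)^{1/4}$ handles case (1), and in case (2) the cyclicity of Sylow $p$-subgroups of $\Aut(S)$ makes part (b) vacuous so that $p\le|I|^4/16+1 \Leftrightarrow |I|\ge 2(p-1)^{1/4}$ suffices. However, the ``main obstacle'' you anticipate for part (b) is self-inflicted and rests on a misreading: Theorem~\ref{thm:q}(b) asks only for $|\Irr_{p'}(X)|>2\sqrt{p-1}$, not for $|\Irr_{p'}(X)|>2\sqrt{p-1}\cdot|X/S|$ (the latter is the separate, merely sufficient criterion remarked at the end of Section~3). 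With the correct target, the observation you already made finishes the proof in one line, exactly as the paper does: each $\chi\in I$ extends to $\Aut(S)$, and restricting such an extension to $X$ gives an irreducible character of $X$ of the same $p'$-degree; distinct $\chi$ yield distinct characters since their restrictions to $S$ differ, so $|\Irr_{p'}(X)|\ge|I|>2\sqrt{p-1}$ under hypothesis (1). No Gallagher product bound $|I|\cdot|\Irr_{p'}(X/S)|$, and no disjointness analysis beyond restriction to $S$, is needed. One minor slip worth correcting: the number of extensions of an invariant $\chi$ to $X$ is not $|X/S|$ in general but the number of \emph{linear} characters of $X/S$ (by Gallagher); this is harmless here since only one extension per $\chi$ is used.
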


\begin{proof}
By assumption $\Out(S)$ has at least $k:=|I|$ orbits on $\Irr_{p'}(S)$. Since
all characters of $I$ extend to $\Aut(S)$, any $S\le X\le\Aut(S)$
has $|\Irr_{p'}(X)|\ge k$. Now $k=|I|>2(p-1)^{1/2}\ge 2(p-1)^{1/4}$, so $(S,p)$
satisfies the condition in Theorem~\ref{thm:q}(b). If Sylow $p$-subgroups of
$\Aut(S)$ are cyclic, we just need $k>2(p-1)^{1/4}$.
\end{proof}

Note that for invariant characters extendibility to $\Aut(S)$ is automatically
satisfied if all Sylow subgroups of $\Out(S)$ are cyclic, for example.

\subsection{The defining characteristic case (for rank $l \geq 2$)}

\begin{prop}
 Theorem~\ref{thm:q} holds for $S$ of Lie type in characteristic~$p$.
\end{prop}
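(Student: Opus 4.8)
The plan is to deduce the Proposition entirely from Lemma~\ref{lem:adjoint}, so that the work reduces to a good lower bound for $|\Irr_{p'}(G)|$ balanced against $2\sqrt{p-1}\cdot|\Out(S)|_{p'}$. Since $p$ equals the defining characteristic $r$, the natural supply of $p'$-degree characters is furnished by the semisimple characters of $G=\bG^F$. As $\bG$ is of adjoint type, the dual group $\bG^*$ is simply connected, and by Steinberg's theorem the finite group $(\bG^*)^F$ has exactly $q^l$ semisimple conjugacy classes, where $l$ denotes the rank. Each such class affords a semisimple character of $G$, whose degree is the $p'$-part of an index $[(\bG^*)^F:C_{\bG^*}(s)^F]$ and hence automatically prime to $p$. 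I would therefore record the clean estimate $|\Irr_{p'}(G)|\ge q^l$, and I would emphasise that I want the \emph{exact} Steinberg count, not a weaker bound, for reasons that appear in the borderline case below.

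Next I would verify that Lemma~\ref{lem:adjoint} applies. Since $\bG$ is adjoint, $|G/S|$ divides the order of the fundamental group, which is a divisor of a product of factors $q^i\mp 1$ and so is coprime to the defining characteristic $p$; thus $p\nmid|G/S|$. By Lemma~\ref{lem:adjoint} it then suffices to establish the single inequality $2\sqrt{p-1}\cdot|\Out(S)|_{p'}<q^l$. Granting this, both parts of Theorem~\ref{thm:q} follow at once: the proof of Lemma~\ref{lem:adjoint} yields $k>2\sqrt{p-1}$, which gives (a) directly, and since $p\nmid|X/S|$ every $X$-character above a $p'$-character of $S$ again has $p'$-degree, whence $|\Irr_{p'}(X)|\ge k>2\sqrt{p-1}$ for (b). (Here the Sylow $p$-subgroups are in any case non-cyclic, since in defining characteristic a Sylow $p$-subgroup of $\bG^F$ has order $q^N$ with $N\ge 3$ for rank $l\ge 2$, but this observation is not even needed.)

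Finally I would carry out the verification of $2\sqrt{p-1}\cdot|\Out(S)|_{p'}<q^l$. Writing $|\Out(S)|=dfg$ for the orders of the diagonal, field and graph automorphisms, and using $p\le q$ together with $d\le l+1$, $g\le 6$ and $f\le\log_2 q$, the claim reduces to $q^{l-1/2}>2dfg$, which holds with ample room once $l\ge 3$ or $q$ is large. This leaves only a short explicit list of small-rank, small-field cases (essentially $l=2$ with small $q$, across types $A_2$, $\tw2A_2$, $B_2$ and $G_2$) to be settled by inspection. The decisive one is $S=\PSU_3(5)$, where $|\Out(S)|=6$ is prime to $p=5$ and $2\sqrt{p-1}\cdot 6 = 24 < 25 = q^l$; everywhere else the margin is comfortable. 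I expect this to be the main obstacle: the asymptotics are routine, but the inequality is genuinely tight in the unitary case, so the argument relies both on working with $\sqrt{p-1}$ rather than the coarser $\sqrt{q}$ and on Steinberg's exact value $q^l$ for $|\Irr_{p'}(G)|$ — a cruder lower bound would fail precisely here.
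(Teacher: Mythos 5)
Your proposal is correct and follows essentially the same route as the paper: a lower bound $q^l$ on $|\Irr_{p'}(G)|$ via semisimple characters of the adjoint group (the paper even gets equality, citing Brunat), reduction through the orbit-counting mechanism of Lemma~\ref{lem:adjoint}, and a numerical verification of $2\sqrt{p-1}\,|\Out(S)| < q^l$ in which the paper's special case $(f,l,p)=(1,2,5)$ with $|\Out(S)|\le 6$ is exactly your tight case $\PSU_3(5)$. The only (harmless) deviations are that you use the inequality $|\Irr_{p'}(G)|\ge q^l$ where the paper invokes the exact characterization of $p'$-degree characters as semisimple ones, and you organize the outer automorphism bound as $dfg$ rather than the paper's $(6l+3)f$.
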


\begin{proof}
As before, let $\bG$ be a simple linear algebraic group in characteristic~$p$
of adjoint type with
a Steinberg endomorphism $F:\bG\rightarrow\bG$ and $G:=\bG^F$ such that
$S=[G,G]$. All finite simple groups of Lie type are of this form (see
\cite[Prop.~24.21]{MT}). We denote by $(\bG^{*},F^{*})$ the dual pair of
$(\bG,F)$ (see \cite[Sec.~4.2]{Ca}). Here $\bG^{*}$ is a simple
algebraic group of simply connected type. We denote the corresponding finite
group of Lie type by $G^*$. By \cite[Prop.~24.21]{MT}, we have
$G^*/Z(G^*) \cong [G,G]=S$. Since
$p \geq 5$, we know by \cite[Lemma 5]{Brunat} that the set of $p'$-degree
complex irreducible characters of $G$ is precisely the set of
semisimple characters of $G$, whose elements are labelled by
representatives of the conjugacy classes of semisimple elements of
$G^*$. Thus $|\Irr_{p'}(G)| = q^{l}$ where $l$
is the semisimple rank of $\bG^*$, and $q$ is the absolute value of all
eigenvalues of $F$ on the character group of an $F$-stable maximal torus of
$\bG$, by \cite[Thm.~3.7.6(ii)]{Ca}.

By Clifford theory we then have
$$q^{l} = |\Irr_{p'}(G)| \leq |G:S| \cdot t$$
where $t$ is the number of $G/S$-orbits on $\Irr_{p'}(S)$. By the
orbit-counting lemma,
$$q^{l} \leq |G:S| \cdot t = \sum_{g \in G/S} |\fix(g)|
   \leq \sum_{g \in \Out(S)} |\fix(g)| \leq k \cdot |\Out(S)|.$$
So we get $q^{l}/|\Out(S)| \leq k$.

In order to prove Theorem~\ref{thm:q} for $(S,p)$ it is sufficient to see that
$q^{l}/|\Out(S)| > 2 \sqrt{p-1}$, where $q=p^f$. Bounds for $|\Out(S)|$ can be
read off from \cite[Tab.~5]{Atl}. If $(f,l,p) \ne (1,2,5)$ nor $(1,2,7)$, then
the bound $|\Out(S)| \leq (6 l + 3)f$ is sufficient for our purposes (note
that $l \geq 2$). On the other hand, if $(f, l,p) = (1,2,5)$ or
$(1,2,7)$ then the bounds $|\Out(S)| \leq 6$ and $|\Out(S)| \leq 8$ are
sufficient, respectively. 		
\end{proof}

\subsection{Exceptional type groups in non-defining characteristic}   \label{subsec:exc}

\begin{prop}   \label{prop:exc}
 Let $S$ be a simple exceptional group of Lie type, not of type $\tw2B_2$ or
 $^2G_2$, and $p\ge5$ a prime dividing $|S|$ but different from the defining
 characteristic. Then $(S,p)$ satisfies the conclusion of Theorem~\ref{thm:q}.
\end{prop}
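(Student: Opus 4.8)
The plan is to verify, for each exceptional type separately, that one of the two sufficient criteria established earlier (Lemma~\ref{lem:adjoint} or Lemma~\ref{lem:easy}) applies. The strategy rests on the fact that exceptional groups have a fixed, short list of types ($G_2$, $\tw3D_4$, $F_4$, $E_6$, $\tw2E_6$, $E_7$, $E_8$, and $\tw2F_4$), small outer automorphism groups, and that their $p'$-degree character counts are controlled by the semisimple characters of the dual group. First I would recall that by \cite[Lemma 5]{Brunat}, since $p\ge 5$, the $p'$-degree characters of $G=\bG^F$ are exactly the semisimple characters, so $|\Irr_{p'}(G)| = |G^*_{\mathrm{ss}}/\!\sim|$ equals the number of semisimple conjugacy classes in the dual, which for these fixed types is a known polynomial in $q$ (the relevant "generic" class counts are tabulated, e.g.\ $q^l$ up to lower-order terms). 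Since $|\Out(S)|$ is tiny (bounded by a small constant times $f$, where $q=p^f$), Lemma~\ref{lem:adjoint} will handle all but finitely many $(S,p)$: for large $q$ the count $|\Irr_{p'}(G)|$ grows like $q^l$ while $2\sqrt{p-1}\cdot|\Out(S)|_{p'}$ grows only polynomially in $p=p^1\le q$, so the inequality $2\sqrt{p-1}\cdot|\Out(S)|_{p'} < |\Irr_{p'}(G)|$ holds outside a bounded range.

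Next, the bulk of the work is the finite residue: small $q$ and small $p$. For these I would switch to Lemma~\ref{lem:easy}, which needs only a set $I\subseteq\Irr_{p'}(S)$ of $\Out(S)$-invariant characters that extend to $\Aut(S)$, with $|I|$ large enough relative to $p$ (specifically $p<|I|^2/4+1$, or $p\le|I|^4/16+1$ when Sylow $p$-subgroups of $\Aut(S)$ are cyclic). A natural supply of such characters is the unipotent characters, which are $\Out(S)$-invariant (up to the known graph/field action, which is accounted for) and extend to $\Aut(S)$ by results of Malle; their number is fixed for each type (e.g.\ $28$ for $E_7$, $112$ for $E_8$, $13$ for $F_4$, etc.). Since $|I|$ can be taken to be this fixed, large number of unipotent characters, criterion~(1) of Lemma~\ref{lem:easy} already gives the conclusion whenever $p$ is below $|I|^2/4+1$, which covers every prime $p$ that can arise for the smaller exceptional types. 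For the larger types where even more unipotent characters are available, the slack is enormous and the verification is immediate.

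The one place requiring genuine care is the interface between the two regimes, together with the groups whose Sylow $p$-subgroups happen to be cyclic, where part~(b) of Theorem~\ref{thm:q} demands only the weaker bound $k>2(p-1)^{1/4}$ but part~(a) still needs $k\ge 2(p-1)^{1/4}$. For these I would invoke the cyclic case of Lemma~\ref{lem:easy}(2), which relaxes the requirement to $p\le|I|^4/16+1$ and thus admits much larger primes. The main obstacle is therefore bookkeeping rather than any single hard inequality: one must confirm, type by type, that the $\Out(S)$-invariance and $\Aut(S)$-extendibility hypotheses of Lemma~\ref{lem:easy} genuinely hold for the chosen unipotent (or semisimple) characters, paying attention to the exceptional graph automorphisms in types $F_4$ (for $p=2$, irrelevant here since $p\ge5$) and $E_6$, and to the fact that for $\tw2F_4$ the relevant data differ slightly. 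I expect that combining the asymptotic bound from Lemma~\ref{lem:adjoint} for large $q$ with the fixed-size family $I$ of extendible unipotent characters via Lemma~\ref{lem:easy} for small $q$ disposes of every case, leaving only a short explicit check for the finitely many smallest pairs $(S,p)$ that lie in neither comfortable regime.
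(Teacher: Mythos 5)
Your overall architecture (asymptotic regime via Lemma~\ref{lem:adjoint}, finite residue via Lemma~\ref{lem:easy} with extendible unipotent characters) does match the paper's. But the proposal contains a genuine gap at its foundation: you invoke \cite[Lemma 5]{Brunat} to claim that for $p\ge 5$ the $p'$-degree characters of $G$ are exactly the semisimple characters, hence $|\Irr_{p'}(G)|\approx q^l$. That lemma is a \emph{defining characteristic} statement; the paper uses it only in the case $p=r$. In the present proposition $p$ is explicitly different from the defining characteristic, and the identification fails there: a semisimple character in $\cE(G,s)$ has degree $|G^*:C_{G^*}(s)|_{r'}$, which is prime to $p$ only when $C_{G^*}(s)$ contains a Sylow $p$-subgroup. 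The paper's actual substitute is to fix the unique $d$ with $p\mid\Phi_d(q)$, take the maximal torus $T_d$ containing a Sylow $p$-subgroup, and observe via Lusztig's Jordan decomposition that semisimple elements $s\in T_d$ yield $p'$-degree semisimple characters; this gives only $|\Irr_{p'}(G)|\ge|T_d|/|W_d|$, a bound of size roughly $\Phi_d(q)^{a_d}/|W_d|$, not $q^l$. Your asymptotic comparison is also miscalibrated: you write $q=p^f$ and $p\le q$, but $q=r^f$ with $r\ne p$, and since $p\mid\Phi_d(q)$ the prime $p$ can be as large as roughly $q^{\varphi(d)}$ (e.g.\ $p\le q^8$ in $E_8$ for $d=30$), so the inequality $2\sqrt{p-1}\,|\Out(S)|<|T_d|/|W_d|$ must be checked case by case for each $d$, which is precisely the type-by-type bookkeeping the paper carries out.

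A second gap concerns your choice of $I$ in Lemma~\ref{lem:easy}: the lemma requires $I\subseteq\Irr_{p'}(S)$, but you take $I$ to be \emph{all} unipotent characters of the type in question. Most unipotent characters do not have degree prime to $p$; the correct supply is the principal $d$-Harish-Chandra series, of size $|\Irr(W_d)|$, which depends on $d$ and hence on $p$ (this is why the paper's Table~\ref{tab:invuni} lists bounds varying with $d$, and why it separately handles the non-abelian Sylow cases $p\in\{5,7\}$ for $\tw{(2)}E_6$, $E_7$, $E_8$). Finally, your dismissal of the extendibility caveats is misdirected: the exceptional graph automorphisms obstructing \cite[Thm.~2.5]{MaE} occur for $G_2$ in defining characteristic $r=3$ and for $F_4$ in defining characteristic $r=2$ --- conditions on $r$, not on $p$ --- so they are not excluded by $p\ge5$ and must be dealt with (the paper flags exactly these two exceptions). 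With these three repairs --- Jordan decomposition on $T_d$ in place of Brunat, $|\Irr(W_d)|$ invariant $p'$-degree unipotent characters in place of the full unipotent count, and the $r=2,3$ extendibility exceptions --- your plan becomes the paper's proof.
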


\begin{proof}
Let $G$ be a finite reductive group of adjoint type with socle $S$. We first
deal with the primes $p$ for which Sylow $p$-subgroups of $G$ are non-abelian.
These necessarily divide the order of the Weyl group $W$ of $G$, so $p\le7$,
and $G$ is of type $\tw{(2)}E_6$, $E_7$ or $E_8$. Furthermore, $p|(q\pm1)$
if $p=7$, or if $p=5$ and $G$ is not of type $E_8$. It is then straightforward
to check (for example from the tables in \cite[\S13.9]{Ca}) that $G$ has at
least as many unipotent characters of $p'$-degree as given in
Table~\ref{tab:invunismall}. Since unipotent characters extend to $\Aut(S)$
by \cite[Thm.~2.5]{MaE}, the claim follows from Lemma~\ref{lem:easy} in this
case.

\begin{table}[htbp]
 \caption{Invariant unipotent characters, $p\in\{5,7\}$}  \label{tab:invunismall}
\[\begin{array}{|r|ccc|}
\hline
   G& \tw{(2)}E_6& E_7& E_8\cr
\hline
 p=5& 10& 30& 20\\
 p=7&  -& 14& 28\\
\hline
\end{array}\]
\end{table}

We may now assume that Sylow $p$-subgroups of $G$ are abelian. Then there
exists a unique cyclotomic polynomial $\Phi_d$ dividing the generic order of
$G$ and such that $p|\Phi_d(q)$. Moreover, there exists a maximal torus $T_d$
of $G$ containing a Sylow $d$-torus of $G$, and so in particular a Sylow
$p$-subgroup of $G$ (see \cite[Thm.~25.14]{MT}). Let $\Phi_d^{a_d}$ be the
precise power of $\Phi_d$ dividing the order polynomial of $G$. The Sylow
$p$-subgroups of $G$ are cyclic if and only if $a_d=1$. Let $W_d$ be the
relative Weyl group of $T_d$. Then by generalized Harish-Chandra theory (or
alternatively from the formulas in \cite[\S13.9]{Ca}) there
exist at least $|\Irr(W_d)|$ many unipotent characters of $G$ of $p'$-degree.
By \cite[Thms.~2.4 and~2.5]{MaE} all of these extend to $\Aut(S)$ unless $G$
is of type $G_2$ and $r=3$, or of type $F_4$ and $r=2$. The various $W_d$ and
$a_d$ are explicitly known (see e.g. \cite[Tables~1 and~3]{BMM}), and applying
Lemma~\ref{lem:easy} we conclude that our claim holds if $p$ is as in
Table~\ref{tab:invuni}. Here, the left-most half of the table contains the
cases with $a_d>1$, while in the right-most part we have $a_d=1$, so Sylow
$p$-subgroups are cyclic.

\begin{table}[htbp]
 \caption{$\Aut(S)$-invariant unipotent characters}  \label{tab:invuni}
\[\begin{array}{|cl|r|l||l|r|l|}
\hline
     G& d& \#& \quad p&   d& \#&\quad p\cr
\hline
     G_2& 1,2&  6& p\le10&    3,6& 6& p\le82\\
 \tw3D_4& 1,2&  6& p\le10&    12&  4& p\le17\\
        & 3,6&  7& p\le13&          & & \\
 \tw2F_4& 1,4,8',8''& 7& p\le13& 12,24',24''& 12& p\le1297\\
     F_4& 1,2& 11& p\le31&    8,12& \ge8& p\le257\\
        & 3,6& 9& p\le21&      & & \\
^{(2)}E_6& 1,2,3,4,6& \ge16& p\le65& 5,8,9,12,(10,18)& \ge5& p\le40\\
     E_7& 1,2,3,4,6& \ge48& p\le577& 5,7,8,9,10,12,14,18& \ge14& p\le2402\\
     E_8& 1,2,3,4,6& \ge59& p\le871& 7,9,14,18& \ge28& p\le38417\\
        & 5,8,10,12& \ge32& p\le257& 15,20,24,30& \ge20& p\le10001\\
\hline
\end{array}\]
\end{table}

So from now on we suppose that $p$ is larger than the bound given in the
table.
Let $d,T_d,W_d$ be as above. Let $s\in T_d$ be semisimple. Then $s$ centralizes
a Sylow $p$-subgroup of $G$, so the semisimple character in the Lusztig series
$\cE(G,s)$ has degree prime to~$p$ by Lusztig's Jordan decomposition
(see e.g.~\cite[Prop.~7.2]{MaH}). Since fusion of semisimple elements in
maximal tori is controlled by the relative Weyl group, there exist at least
$|T_d|/|W_d|$ semisimple conjugacy classes of $G$ with representatives in
$T_d$, whence $|\Irr_{p'}(G)|\ge |T_d|/|W_d|$. We now go through the various
types of groups.
\par
Let first $G=S=G_2(q)$ with $q=r^f>2$ (as $G_2(2)\cong\Aut(\PSU_3(3))$).
Then $\Out(S)$ is cyclic of order $f$ for $r\ne3$ respectively $2f$ for $r=3$,
and $d\in\{1,2,3,6\}$, with $a_d=2$ for $d=1,2$ and $a_d=1$ else.
Table~\ref{tab:invuni} then shows that $q\ge 11$. It is now straightforward
to check that $|T_d|/|W_d|>2\sqrt{p-1}|\Out(S)|$, so the condition in
Lemma~\ref{lem:adjoint} is satisfied in these cases.
\par
Next consider $G=S=\tw3D_4(q)$, $q=r^f$. As before, $\Out(S)$ is cyclic, of
order $3f$. Here, we have $d\in\{1,2,3,6,12\}$, with $a_d=2$ for $d\le6$.
By Table~\ref{tab:invuni} we may assume that $q\ge11$. In all cases the
estimate above gives the claim.
The same arguments also apply to $\tw2F_4(2^{2f+1})$ and $F_4(q)$.
\par
Now assume that $G=E_6(q)$, $q=r^f$. Here the outer automorphism group is of
order $2f\gcd(3,q-1)$, but no longer cyclic. We have
$d\in\{1,2,3,4,5,6,8,9,12\}$. First assume that Sylow
$p$-subgroups are cyclic, so $d\in\{5,8,9,12\}$. Then $p\ge41$ by
Table~\ref{tab:invuni}, and $|W_d|\le12$. The standard estimate now applies.
For $d\in\{2,3,4,6\}$ we have $67\le p\le q^2+1$, while $|T_d|\ge (q^2-q)^3$
and $|W_d|\le 1152$, while for $d=1$ we have $67\le p\le q-1$ and
$|T_d|=(q-1)^6$. In all cases we obtain a contradiction to the standard
estimate. The case of $\tw2E_6(q)$ can be handled similarly.
For $E_7(q)$ the outer automorphism group has order $f\gcd(2,q-1)$, and
the same approach as before applies.
Finally, let $G=S=E_8(q)$ with $q=r^f$. Then $|\Out(S)|=f$.
We now discuss the various possibilities for $d$. If $d=1$, so $p|(q-1)$,
then $W_d$ is the Weyl group of $G$, with $|\Irr(W_d)|=112$. So we are done
whenever $2f\sqrt{p-1}<112$, which certainly is the case for $q\le1000$. For
$q\ge1001$ we have
$$\Phi_d(q)^a/|W_d|=(q-1)^8/696729600>2\log_p(q)\sqrt{p-1}.$$
The case $d=2$ is very similar. For $d=3$ or $d=6$, $|W_d|=155\,520$ (see
\cite[Table~3]{BMM}) and $|\Irr(W_d)|=102$. We may conclude as before.
Similarly, for $d=4$ we have $|W_d|=46080$ and $|\Irr(W_d)|=59$;
for $d=5$ or $d=10$ we have $|W_d|=600$ and $|\Irr(W_d)|=45$; for $d=12$ we
have $|W_d|=288$ and $|\Irr(W_d)|=48$. Finally, for the cases
$d\in\{7,14,9,18,15,20,24,30\}$ with cyclic Sylow $p$-subgroups the estimates
are even easier, using the bounds in Table~\ref{tab:invuni}.
This achieves the proof.
\end{proof}

\subsection{Groups of classical type in non-defining characteristic}   \label{subsec:class}

\begin{prop}   \label{prop:class}
 Let $S$ be a simple classical group of Lie type and $p\ge5$ a prime dividing
 $|S|$ but different from the defining characteristic. Then $(S,p)$ satisfies
 the conclusion of Theorem~\ref{thm:q}.
\end{prop}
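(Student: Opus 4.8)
The plan is to follow the template already established for exceptional groups in Proposition~\ref{prop:exc}, adapting it to the classical types $\PSL_n$ (excluding $\PSL_2$, which is allowed), $\PSU_n$, $\PSp_{2n}$ and the orthogonal groups. First I would let $G=\bG^F$ be of adjoint type with socle $S$, and split the analysis according to whether Sylow $p$-subgroups of $G$ are abelian. The primes giving non-abelian Sylow subgroups must divide $|W|$, hence are bounded (at most $2n$-ish in terms of the rank), and for these I would exhibit enough $\Aut(S)$-invariant unipotent characters of $p'$-degree and invoke Lemma~\ref{lem:easy}. For classical groups the unipotent characters are labelled by symbols, and their number grows with the rank, so producing the required $|I|$ with $p<|I|^2/4+1$ should be routine once the rank is large; the small-rank cases will need to be inspected individually.

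For the main (abelian Sylow) case I would reuse the torus estimate: there is a unique $d$ with $p\mid\Phi_d(q)$, a maximal torus $T_d$ containing a Sylow $d$-torus, and since a semisimple $s\in T_d$ centralizes a Sylow $p$-subgroup, its semisimple character has $p'$-degree by Lusztig's Jordan decomposition. Fusion being controlled by the relative Weyl group $W_d$, this yields $|\Irr_{p'}(G)|\ge|T_d|/|W_d|$, and it suffices to check $|T_d|/|W_d|>2\sqrt{p-1}\,|\Out(S)|$ so that Lemma~\ref{lem:adjoint} applies. The quantities $|T_d|\approx\Phi_d(q)^{a_d}$, the orders $|W_d|$ (wreath products $C_e\wr\fS_m$ for classical $d$), and $|\Out(S)|$ (of size roughly $2f\gcd(n,q\mp1)$) are all explicit, so the inequality becomes an elementary comparison of polynomials in $q$. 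Using $p\le\Phi_d(q)\le(q+1)^{\varphi(d)}$ to bound $\sqrt{p-1}$ from above and $|T_d|/|W_d|$ from below should close nearly all cases.

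The step I expect to be the main obstacle is controlling $|\Out(S)|$ against $|T_d|/|W_d|$ precisely in the linear and unitary families, where $\gcd(n,q-1)$ can be as large as $n$ and the diagonal automorphisms inflate $|\Out(S)|$ to order $\sim 2fn$; here the crude bound $|\Out(S)|\le(6l+3)f$ used in the defining-characteristic case is no longer available because the torus estimate is weaker than $q^l$. To handle this I would note that when $a_d\ge2$ the torus is already of size at least $\Phi_d(q)^2$, comfortably beating the linear-in-$n$ growth of $|\Out(S)|$, and reserve separate treatment for the cyclic case $a_d=1$, where Lemma~\ref{lem:adjoint}'s weaker hypothesis (only $>2(p-1)^{1/4}$ rather than $>2(p-1)^{1/2}$) gives the needed slack. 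A handful of genuinely small cases (small rank together with small $d$) will have to be verified by direct computation, exactly as the $(f,l,p)=(1,2,5),(1,2,7)$ exceptions were dispatched earlier.
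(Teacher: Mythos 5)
Your proposal follows essentially the same route as the paper's proof: split on whether Sylow $p$-subgroups are abelian, treat the non-abelian case via $\Aut(S)$-invariant unipotent characters of $p'$-degree and Lemma~\ref{lem:easy} (the paper makes this concrete by taking the principal-block unipotent characters indexed by $\Irr_{p'}(W_d)$, which factor through $\fS_a$ and so number at least $p-1$), and treat the abelian case via semisimple characters attached to the torus containing the Sylow $d$-torus, checking an inequality of the shape $|\Irr(W_d)|+(q^d\pm1)^a/|W_d| > 2\sqrt{p-1}\,|\Out(S)|$ so that Lemma~\ref{lem:adjoint} applies, with $a_d=1$ set aside because Sylow $p$-subgroups are then cyclic. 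One small correction: the fourth-root slack you invoke in the cyclic case comes from Theorem~\ref{thm:q}(a) (equivalently Lemma~\ref{lem:easy}(2)), not from any weakened hypothesis of Lemma~\ref{lem:adjoint}, whose assumption is always $2\sqrt{p-1}\cdot|\Out(S)|_{p'}<|\Irr_{p'}(G)|$.
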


\begin{proof}
Let first $G=\SO_{2n+1}(q)$ or $\PCSp_{2n}(q)$ with $q=r^f$ and $n\ge2$.
Here $\Out(S)$ is cyclic of order~$f\gcd(2,q-1)$, respectively of order $2f$
if $n=2$ and $q$ is even. Let $d$ be minimal such that $p$ divides $q^d\pm1$.
A Sylow $d$-torus $T_d$ of $G$ has order $\Phi_d^a$ when $n=ad+s$ with
$0\le s<d$. The centralizer of $T_d$ in $G$ has a subgroup of the form
$(q^d\pm1)^a G_s(q)$, where $G_s$
has the same type as $G$ and rank~$s$ (see \cite[\S3A]{BMM}). The relative Weyl
group $W_d$ of $T_d$ is the wreath product $C_{2d}\wr \fS_a$.\par
If Sylow $p$-subgroups of $G$ are non-abelian, then $p\le n$ divides $|W_d|$,
whence $p\le a$ as $p$ cannot divide $d$. Now the number of unipotent
characters of $p'$-degree of $G$ in the principal $p$-block is at least the
number of $p'$-characters of $W_d$, hence of its factor group $\fS_a$, hence
at least $p-1$, and all of these are $\Out(S)$-invariant by
\cite[Thm.~2.5]{MaE}, so we are done in this case.
\par
Else, the centralizer of $T_d$ contains a Sylow $p$-subgroup of $G$, whence all
semisimple elements of the torus of order $(q^d\pm1)^a$ give rise to
semisimple characters of $G$ in $\Irr_{p'}(G)$, and in addition the unipotent
characters in the principal $p$-block of $G$, of which there are
$|\Irr(W_d)|$ many, have degree coprime to $p$. Thus by Lemma~\ref{lem:adjoint}
if suffices to show that
$$|\Irr(W_d)|+\frac{(q^d-1)^a}{(2d)^a\, a!} > 2f\gcd(2,q-1)\sqrt{p-1}$$
where $p|(q^d\pm1)$. If $a=1$ then Sylow $p$-subgroups of $\Aut(G)$ are cyclic.
Otherwise it is easily seen that this inequality always holds.
\par
Next let $G=\PCO_{2n}^\pm(q)$ with $q=r^f$ and $n\ge4$. Here $\Out(S)$ has
order~$fg\gcd(4,q^n\pm1)$, where $g=6$ for $n=4$ and $g=2$ else denotes the
number of graph automorphisms. Let again $d$ be minimal such that $p$ divides
$q^d\pm1$. The situation is very similar to the one for groups of types $B_n$
and $C_n$, except that the relative Weyl group $W_d$ sometimes is a subgroup
of index two in the wreath product $C_{2d}\wr\fS_a$. Arguing as before we
find that there are no cases with $a>1$ violating the above inequality. For
$a=1$ Sylow $p$-subgroups of $G$ are cyclic.
\par
Next let $G=\PGL_n(q)$ with $q=r^f$ and $n\ge3$. Let $d$ be minimal with
$p$ dividing $q^d-1$ and write $n=ad+s$ with $0\le s<d$. A Sylow $d$-torus
$T_d$ of $G$ has order $\Phi_d^a$. The centralizer of $T_d$ in $G$ contains a
subgroup of the form $(q^d-1)^a G_s(q)$, where $G_s$ is of type $A_{s-1}$.
The relative Weyl group $W_d$ of $T_d$ is the wreath product $C_{d}\wr \fS_a$.
\par
If Sylow $p$-subgroups of $G$ are non-abelian, then $p\le n$ divides $|W_d|$,
and so $p\le a$. Again, the number of unipotent characters of $p'$-degree of
$G$ in the principal $p$-block is at least the number of $p'$-characters of
$W_d$, hence of $\fS_a$, hence at least $p-1$. Since all of these are
$\Out(S)$-invariant, we are done in this case.
\par
Otherwise we may assume that $a>1$. Arguing as in the case of the other
classical groups, we arrive at the following inequality
$$|\Irr(W_d)|+\frac{(q^d-1)^a}{d^a\, a!} > 2f\gcd(n,q-1)\sqrt{p-1},$$
which turns out to be satisfied for all relevant values.
\par
The case of $G=\PGU_n(q)$ is entirely similar, which $q^d-1$ replaced by
$q^d-(-1)^d$ throughout. The proof is complete.
\end{proof}

\section{Proof of Theorem \ref{thm:equality}}

In this section we prove Theorem~\ref{thm:equality}.

\begin{lem}   \label{e1}
 Let $G$ be a finite group, $p$ a prime divisor of the order of $G$, and $P$
 a Sylow $p$-subgroup of $G$. Suppose that $\sqrt{p-1}$ is an integer and set
 $H$ to be the Frobenius group $C_{p} \rtimes C_{\sqrt{p-1}}$ (whose subgroup
 of order $p$ is self centralizing).
 Then $|\Irr_{p'}(G)| = 2\sqrt{p-1}$ if and only if $N_{G}(P) \cong H$.
 Moreover this happens if and only if $G \cong H$, or $O_{p'}(G) = F(G)$, the
 subgroup $F(G)P$ is a Frobenius group, and $G/F(G)$ is either isomorphic
 to $H$ or is an almost simple group $A$ with $N_{A}(F(G)P/F(G)) \cong H$.
\end{lem}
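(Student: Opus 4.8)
The plan is to treat the two assertions of the lemma separately. The first equivalence, that $|\Irr_{p'}(G)| = 2\sqrt{p-1}$ if and only if $N_{G}(P) \cong H$, is essentially the first half of Theorem~\ref{thm:equality} and is already in hand: by the reduction to Theorem~\ref{thm:q} (now proved), equality forces the Sylow $p$-subgroup $P$ to be cyclic, so McKay holds for $(G,p)$ by Dade's theorem \cite[Thm.~1]{Dade} and the question passes to $N_{G}(P)$, which has a normal Sylow $p$-subgroup; the normal-Sylow analysis in Section~2 shows that there equality holds precisely when the group is isomorphic to $H$. Conversely $N_{G}(P)\cong H$ forces $P\cong C_{p}$, and then the same McKay argument together with the direct count $|\Irr_{p'}(H)| = 2\sqrt{p-1}$ gives equality. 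So the genuine content is the structural characterisation of the condition $N_{G}(P)\cong H$, which I describe next.

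\emph{Forward direction.} Suppose $N_{G}(P)\cong H$. Then $|P| = p$ and, since the subgroup of order $p$ in $H$ is self-centralising, $C_{G}(P) = C_{N_{G}(P)}(P) = P$. In particular $P$ acts fixed-point-freely on $O_{p'}(G)$, so by Thompson's theorem $O_{p'}(G)$ is nilpotent and hence contained in $F(G)$. If $O_{p}(G)\ne1$ then $O_{p}(G) = P$, so $P\trianglelefteq G$ and $G = N_{G}(P)\cong H$. Otherwise $O_{p}(G) = 1$, whence $F(G) = O_{p'}(G) =: F$ is a $p'$-group; moreover $C_{F}(P) = 1$, so $F(G)P = FP$ is a Frobenius group. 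This already isolates the two alternatives of the statement, and it remains to analyse $\overline{G} := G/F$.

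Passing to $\overline{G}$, a Frattini argument gives $N_{\overline{G}}(\overline{P}) = N_{G}(P)F/F$, where $\overline{P} = FP/F$ is a Sylow $p$-subgroup of order $p$. Since $N_{F}(P)\le C_{F}(P) = 1$ (because $[N_{F}(P),P]\le F\cap P = 1$), we have $N_{G}(P)\cap F = 1$ and therefore $N_{\overline{G}}(\overline{P})\cong H$; in particular $C_{\overline{G}}(\overline{P}) = \overline{P}$. A preimage argument shows $O_{p'}(\overline{G}) = 1$. Now either $\overline{P}\trianglelefteq\overline{G}$, forcing $\overline{G} = N_{\overline{G}}(\overline{P})\cong H$, or $O_{p}(\overline{G}) = 1$ as well, so that $F(\overline{G}) = 1$ and the generalised Fitting subgroup is the layer $F^{*}(\overline{G}) = E(\overline{G}) = S_{1}\times\cdots\times S_{m}$, a direct product of non-abelian simple groups (centreless, since $Z(E(\overline{G}))\le F(\overline{G}) = 1$). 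The self-centralising hypothesis on $\overline{P}$ then pins this down: $\overline{P}$ cannot permute the $S_{i}$ in an orbit of length $p$ (a diagonal subgroup would centralise $\overline{P}$), so $\overline{P}$ normalises each $S_{i}$; and $\overline{P}\le E(\overline{G})$, for otherwise $\overline{P}$ would act fixed-point-freely on the non-nilpotent group $E(\overline{G})$, contradicting Thompson's theorem. Writing $\overline{P} = \langle(x_{1},\dots,x_{m})\rangle$, the bound $|C_{E(\overline{G})}(\overline{P})| = \prod_{i} |C_{S_{i}}(x_{i})|\le p$ forces $m = 1$. Hence $\overline{G}$ is almost simple with socle $S := S_{1}$, i.e.\ $A := G/F$ is an almost simple group with $N_{A}(F(G)P/F(G))\cong H$, as required. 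This component analysis is the main obstacle: everything hinges on combining $C_{\overline{G}}(\overline{P}) = \overline{P}$ with Thompson's theorem to drive $\overline{P}$ into the layer and then to collapse to a single simple factor.

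\emph{Reverse direction.} Conversely, assume the structural conditions. If $G\cong H$ then $P$ is normal and $N_{G}(P) = G\cong H$. Otherwise put $F := F(G) = O_{p'}(G)$; since $FP$ is a Frobenius group with complement $P$ we have $C_{F}(P) = 1$, hence $N_{F}(P) = 1$ exactly as above, so $N_{G}(P)\cap F = 1$. By the Frattini argument $N_{G}(P)F/F = N_{\overline{G}}(\overline{P})$, and in both sub-cases the hypothesis gives $N_{\overline{G}}(\overline{P})\cong H$ (if $\overline{G}\cong H$ then $\overline{P}$ is normal and $N_{\overline{G}}(\overline{P}) = \overline{G}\cong H$). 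Therefore $N_{G}(P)\cong N_{G}(P)F/F\cong H$, completing the proof.
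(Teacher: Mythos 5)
Your proof is correct, and its overall skeleton coincides with the paper's: cite the first equivalence from the earlier sections, use self-centralization of $P$ plus Thompson's theorem \cite[Thm.~5.1']{W} to get $O_{p'}(G)\le F(G)$, split on whether $O_p(G)$ is trivial, analyse $\overline{G}=G/O_{p'}(G)$, and run the Frattini argument for the converse. But you diverge from the paper in one key step, to your advantage. The paper transfers the normalizer condition to $\overline{G}$ character-theoretically: it sandwiches $2\sqrt{p-1}\le|\Irr_{p'}(\overline{G})|\le|\Irr_{p'}(G)|=2\sqrt{p-1}$ using Theorem~\ref{thm:main} and then invokes the already-proved first equivalence to conclude $N_{\overline{G}}(\overline{P})\cong H$; you instead prove $N_G(P)\cap O_{p'}(G)=N_{O_{p'}(G)}(P)=1$ directly (via $[N_F(P),P]\le F\cap P=1$) and get $N_{\overline{G}}(\overline{P})=N_G(P)F/F\cong H$ by Frattini, which is purely group-theoretic and independent of the character counts. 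Moreover, where the paper asserts in a single sentence that $O_{p'}(\overline{G})=1$ and $|\overline{P}|=p$ force $\overline{P}\trianglelefteq\overline{G}$ or $\overline{G}$ almost simple, you supply the full justification: $F(\overline{G})=1$, so $F^{*}(\overline{G})=E(\overline{G})$ is a product of centreless simple components; the diagonal fixed-point argument rules out a $\overline{P}$-orbit of length $p$ on components; Thompson's theorem drives $\overline{P}$ into the layer; and the centralizer bound $p^{m}\le|C_{\overline{G}}(\overline{P})|=p$ collapses to one component. (An alternative, even shorter, route to the dichotomy is to note that a component of $p'$-order has normal closure a nontrivial normal $p'$-subgroup, contradicting $O_{p'}(\overline{G})=1$, and then count Sylow orders; but your argument is sound.) One shared cosmetic wrinkle, inherited from the statement itself and present in the paper's proof too: when $F(G)=1$ (e.g.\ $G=\fA_5$, $p=5$) the subgroup $F(G)P=P$ is only degenerately a ``Frobenius group,'' and both you and the authors pass over this silently.
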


\begin{proof}
We have already proved the first statement of the lemma in the preceding
sections.

So now suppose that $N_{G}(P) \cong H$ holds. Then by Theorem~\ref{thm:main},
we have
$$2 \sqrt{p-1}\leq |\Irr_{p'}(G/O_{p'}(G))| \leq |\Irr_{p'}(G)| = 2\sqrt{p-1}$$
and so $N_{G/O_{p'}(G)}(Q) \cong H$ for a Sylow $p$-subgroup $Q$ of
$G/O_{p'}(G)$. Since $O_{p'}(G/O_{p'}(G)) = 1$ and $|Q|=p$, we see that either
$Q$ is normal in $G/O_{p'}(G)$ and thus $G/O_{p'}(G) \cong H$, or $G/O_{p'}(G)$
is almost simple. Since $P$ is self centralizing in $G$, it acts fixed point
freely on $O_{p'}(G)$ and so $O_{p'}(G) P$ is a Frobenius group. By Thompson's
theorem \cite[Thm.~5.1']{W}, $O_{p'}(G) \leq F(G)$. The other containment
follows from $P \not\leq F(G)$ whenever $G \not\cong H$.

Now consider the other implication of the second statement of the lemma.
Assume that $G \not\cong H$. Since $F(G)P$ is a Frobenius group, we have
$N_{G}(P) \cap F(G) = 1$. Furthermore $N_{G}(P)$ is isomorphic to
$N_{G/F(G)}(F(G)P/F(G)) \cong H$.
\end{proof}

To finish the proof of Theorem \ref{thm:equality}, we need to classify almost
simple groups $A$ with the property that the normalizer of a Sylow
$p$-subgroup in $A$ is the Frobenius group $C_{p} \rtimes C_{\sqrt{p-1}}$
(whose subgroup of order $p$ is self centralizing).

\begin{prop}   \label{prop:almost simple}
 Let $A$ be a finite almost simple group and $p$ a prime. Then the
 Sylow $p$-subgroups of $A$ are as described in Lemma~\ref{e1}
 if and only if $A$ is as in (1)--(4) of Theorem \ref{thm:equality}.
\end{prop}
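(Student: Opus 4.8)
The plan is to show that the condition on $A$ forces very strong numerical constraints, and then to run a classification over the finite simple groups. First I would extract the arithmetic content of the hypothesis. The requirement is that a Sylow $p$-subgroup $P$ of the socle $S$ is cyclic of order $p$ (since $N_A(P)\cong C_p\rtimes C_{\sqrt{p-1}}$ has a Sylow $p$-subgroup of order exactly $p$) and that $N_A(P)/C_A(P)$ is cyclic of order $\sqrt{p-1}$ with $C_A(P)=P$; in particular $p$ exactly divides $|A|$, $\sqrt{p-1}\in\mathbb{Z}$, and the order of $N_A(P)$ is $p\sqrt{p-1}$. The key observation is that Theorem \ref{thm:equality} (first half) together with the McKay conjecture as verified through Theorem \ref{thm:q} tells us $|\Irr_{p'}(A)|=2\sqrt{p-1}$, so I can feed the explicit inequalities proved in Propositions \ref{prop:exc} and \ref{prop:class} and in Section 4 backwards: those results show $|\Irr_{p'}(X)|>2\sqrt{p-1}$ (or $k\ge 2(p-1)^{1/4}$ is strict) for all but finitely many configurations, so the \emph{equality} case $|\Irr_{p'}(A)|=2\sqrt{p-1}$ can only survive in the boundary cases of Tables \ref{tab:invuni} and \ref{tab:invunismall} and in the rank-one groups $\PSL_2(q)$, $\tw2B_2(q^2)$, $\tw2G_2(q^2)$ that were set aside. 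This immediately bounds $p$ by the table entries and reduces the list to a manageable finite set of candidate pairs $(S,p)$.

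Next I would organize the search by the value of $p$. Because $\sqrt{p-1}$ must be an integer with $p$ exactly dividing $|S|$, the admissible primes are $p\in\{5,17,37,257,\dots\}$, and the boundary columns of Table \ref{tab:invuni} cap $p$ (e.g.\ $p\le 257$ for the relevant $E_8$, $F_4$ and $O^{\pm}$ rows), so in practice only $p=5,17,37,257$ arise. For each such $p$ I would list the simple groups $S$ with $p\,\|\,|S|$ and a self-centralizing cyclic Sylow $p$-subgroup, then test the two precise conditions: (i) $|N_S(P)|$ must be compatible with $N_A(P)$ being Frobenius of order $p\sqrt{p-1}$, and (ii) the relevant outer automorphisms of $p'$-order must combine with $N_S(P)$ to give exactly $C_{\sqrt{p-1}}$ acting fixed-point-freely. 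Condition (i) is controlled by the structure of tori and relative Weyl groups already computed in the proofs of Propositions \ref{prop:exc} and \ref{prop:class}: the torus $T_d$ and relative Weyl group $W_d$ determine $N_S(P)$, so I can read off $|N_S(P)|$ and check whether its $p$-complement is cyclic of the right order.

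The main obstacle, I expect, is condition (ii): ruling out the near-misses where $N_S(P)$ already contains a non-cyclic or too-large $p$-complement, and conversely verifying that in each surviving candidate the diagonal and field automorphisms of $A/S$ act so as to produce \emph{exactly} the cyclic group $C_{\sqrt{p-1}}$ rather than something larger or with nontrivial centralizer. This is genuinely case-by-case and relies on knowing $N_A(P)$ precisely, which for the small groups ($\fA_5$, $\fA_6$, $\PSL_2(11)$, $\PSL_3(4)$ at $p=5$, and their analogues at $p=17,37,257$) I would read from \cite{Atl} and its data on maximal subgroups and automorphism actions. For the generic-looking families such as $\PSL_2(q).e$, $\PSp_{2n}(q)$, $O^{-}_{2n}(q)$ and $F_4(q)$ appearing in the list, I would verify that the prescribed extension degree $e$ is exactly the one making $N_A(P)\cong C_p\rtimes C_{\sqrt{p-1}}$, using the known description of $N_{\Aut(S)}(P)$ from the torus normalizer together with the field/graph automorphism contributions. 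Assembling these verifications for the finitely many pairs $(S,p)$ produces precisely the list (1)--(4), and checking that each entry genuinely realizes $N_A(P)\cong H$ (the converse direction) completes the proof.
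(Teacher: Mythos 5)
There is a genuine gap in your finiteness reduction. You propose to get a finite candidate list by ``feeding backwards'' the strict inequalities of Sections~4--5 and the boundary columns of Tables~\ref{tab:invunismall} and~\ref{tab:invuni}. But in the situation of the proposition the Sylow $p$-subgroup of $A$ has order exactly $p$, hence is cyclic, and for cyclic Sylow $p$-subgroups the earlier sections establish only the weak bound $k\ge 2(p-1)^{1/4}$ of Theorem~\ref{thm:q}(a), not $|\Irr_{p'}(X)|>2\sqrt{p-1}$; in the classical-group analysis the case $a_d=1$ (cyclic Sylow) is explicitly left with no upper bound on $p$, and there are no tables for classical groups at all --- your claimed ``$p\le257$ for the relevant $O^{\pm}$ rows'' of Table~\ref{tab:invuni} refers to rows that do not exist. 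Worse, the rank-one families $\PSL_2(q)$, $\tw2B_2(q^2)$, $\tw2G_2(q^2)$ were excluded from Theorem~\ref{thm:q} entirely, so for them there is no inequality to invert. Yet the classical cyclic-Sylow case and the rank-one case are precisely where most of the listed examples live: $\PSL_2(11)$, $\PSL_2(16).2$, $\PSL_2(256).8$, $\tw2G_2(27)$, $\PSp_{16}(2)$, $\OO_{18}^-(2)$, $\PSU_3(11).2$, etc. So your reduction to ``boundary cases of the tables'' never produces a finite list for the families that actually matter. (For exceptional groups your idea does work, since there the paper proves $|T_d|/|W_d|>2\sqrt{p-1}|\Out(S)|$ beyond the table bounds; but that is the easy part.)

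The missing idea is that the proposition needs no character counting at all: the paper's proof is a direct local analysis. The hypothesis forces the Sylow $p$-subgroup to be (contained in) a self-centralizing maximal torus $T$ of prime order $|T|=p$, and setting $m:=|N_A(T)/T|$ the Frobenius condition becomes the Diophantine equation $m^2+1=|T|=p$, with $m$ even. One then runs through the simple groups: rationality of $p$-elements in $\fS_n$ forces $p=5$ for alternating groups; defining characteristic forces $S=\PSL_2(p)$; and in non-defining characteristic the known torus orders and automizer orders per family (e.g.\ $(q^n\pm1)/\gcd(2,q-1)$ with automizer $2nf$ for types $B_n,C_n$; $(q^n-1)/(q-1)/d$ with automizer $n$ for $\PSL_n$) turn $m^2+1=|T|$ into equations with only finitely many solutions, yielding exactly (1)--(4). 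Your conditions (i) and (ii) gesture at the right local data ($T_d$, $W_d$, field/diagonal automorphism contributions), but without the equation $m^2+1=p$ your search over the infinite families $\PSL_n(q)$, $\PSp_{2n}(q)$, $\OO_{2n}^\pm(q)$, $\PSL_2(q).e$ never terminates, so the classification as you have structured it cannot be completed.
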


\begin{proof}
Note that the smallest primes $p>2$ such that $\sqrt{p-1}$ is an integer
are given by $5,17,37,101,197,257,...$ Assume that $A$ is a non-abelian almost
simple group with socle $S$ and with a Sylow $p$-subgroup as in
Theorem~\ref{thm:equality}. For $S$ a sporadic group, it is readily checked
from the Atlas \cite{Atl} that no example arises (only the primes $p=5,17,37$
are relevant). Now let $S=\fA_n$ with $n\ge5$. Any element of $\fS_n$ is
rational, so any element of order $p$ of $\fA_n$ is conjugate to at least
$(p-1)/2$ of its powers. But $(p-1)/2\le\sqrt{p-1}$ if and only if $p=5$,
and 5-cycles are non-rational only in $\fA_5$ and in $\fA_6$. This occurs in
exception~(1). \par
If $S$ is of Lie type in defining characteristic, its Sylow $p$-subgroups have
order~$p$ only when $S=\PSL_2(p)$, in which case the automizer has order
$(p-1)/\gcd(p-1,2)$. Again, only $p=5$ and $A=\PSL_2(5)=\fA_5$ arises.
\par
Now assume that $S$ is of Lie type but $p$ is not the defining characteristic.
Note that if $p$ divides $|A|$, then it divides $|S|$, unless $A$ contains
a coprime field automorphism. But the latter have non-trivial centralizer in
$S$, so indeed we may suppose that $p$ divides $|S|$.
If $p$ divides the order of the Weyl group of $S$, then $p^2$ divides $|S|$,
so this is not the case. Otherwise Sylow $p$-subgroups of $S$ are abelian and
contained in some maximal torus $T$ of $S$. In particular this torus must
be of prime order~$p$ and self-centralizing. Let $m:=|N_A(T)/T|$, then moreover
$m^2+1=|T|=p$.
So in particular $m$ has to be even. First assume that $S$ is of exceptional
Lie type. It is easily seen that under the above restrictions the only example
is $^2G_2(27)$ with $p=37$ as in~(3), or $F_4(4).2$ with $p=257$ as in~(4).
For example, for $A=E_8(q)$, $q=r^f$, the only possible values for $m$ are
$m=15u,20u,24u,30u$ where $u|f$, while $|T|\ge q^8-q^7+q^5-q^4+q^3-q+1$ for
cyclic maximal tori, which clearly gives no example.
\par
Finally we handle the case that $A$ is of classical Lie type. If $A$ is of
type $B_n(q)$ or $C_n(q)$ with $n\ge2$ the only cyclic self-centralizing tori
have order $(q^n\pm1)/\gcd(2,q-1)$ and automizer of order $2nf$, where $q=r^f$.
But $(q^n\pm1)/\gcd(2,q-1)=(2n)^2+1$ only has the solutions given in cases~(2)
and (4).
For $A$ of type $D_n(q)$ with $n\ge4$ the cyclic self-centralizing tori are of
order $(q^n-1)/\gcd(4,q^n-1)$ with automizer of order $n$, and of order
$q^{n-1}-1$ with $q=2$ with automizer of order $2(n-1)$. These do not lead
to examples.
For groups of type $\tw2D_n(q)$ the cyclic self-centralizing tori are of order
$(q^n+1)/\gcd(2,q^n+1)$ with automizer of order $n$, and of order $q^{n-1}+1$
with $q=2$ with automizer of order $2(n-1)$. The only examples here are those
in~(2) and (4).
\par
Now assume that $S=\PSL_n(q)$ with $n\ge2$. Here, cyclic self-centralizing
tori have orders $(q^n-1)/(q-1)/d$ with automizer of order $n$, and
$(q^{n-1}-1)/d$ with automizer of order $n-1$, where $d:=\gcd(n,q-1)$. This
leads to $\PSL_2(4)\cong\fA_5$, $\PSL_2(9)\cong\fA_6$, $\PSL_2(11)$,
$\PSL_3(4)$, $\PSL_{2}(16).2$ and $\PSL_{2}(256).8$. Finally, for
unitary groups $S=\PSU_n(q)$ with $n\ge3$, cyclic self-centralizing tori have
orders $(q^n-(-1)^n)/(q+1)/d$ with automizer of order $n$, and
$(q^{n-1}-(-1)^{n-1})/d$ with automizer of order $n-1$, where $d:=\gcd(n,q+1)$.
This gives $(A,p) = (\PSU_{3}(11).2,37)$ as the only example.
\end{proof}

Finally we prove the last statement of the Introduction.

\begin{prop}   \label{prop:e2}
 For any prime $p$ with $\sqrt{p-1}$ an integer there are infinitely many
 finite solvable groups $G$ with $|\Irr_{p'}(G)| = 2\sqrt{p-1}$.
\end{prop}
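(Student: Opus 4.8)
The plan is to exploit the structural characterisation already obtained in Lemma~\ref{e1}. Since we are looking for \emph{solvable} examples, the almost simple alternative for $G/F(G)$ is ruled out, so it suffices to produce infinitely many solvable groups $G$ with $O_{p'}(G)=F(G)=:N$ a $p'$-group, with $NP$ a Frobenius group (for $P$ a Sylow $p$-subgroup), and with $G/N\cong H$, where $H=C_p\rtimes C_e$ and $e:=\sqrt{p-1}$. Indeed, for any such $G$ Lemma~\ref{e1} immediately yields $|\Irr_{p'}(G)|=2\sqrt{p-1}$. We therefore reduce the whole statement to constructing an abelian $p'$-group $N$ carrying an action of $H$ under which the normal subgroup $C_p\le H$ acts fixed-point-freely; forming $G=N\rtimes H$ and varying the size of $N$ will then give the required infinite family.

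First I would build the module. Fix any prime $q\ne p$ and let $V$ be the augmentation quotient of the group algebra $\mathbb{F}_q[C_p]$, that is the $(p-1)$-dimensional $\mathbb{F}_q$-space $\mathbb{F}_q[C_p]/\langle\sum_{x\in C_p}x\rangle$. Let $C_p$ act by left translation and let $C_e$ act through the permutation of the basis $C_p$ induced by conjugation in $H$; a direct check of the defining relations shows these combine into a genuine action of $H$ on $\mathbb{F}_q[C_p]$ fixing the line $\langle\sum_x x\rangle$, hence on $V$. Since $(c-1)v$ always has augmentation $0$ whereas $\sum_x x$ has augmentation $p\ne0$, a class $\bar v\in V$ fixed by a generator $c$ of $C_p$ must satisfy $(c-1)v=0$, so $v\in\langle\sum_x x\rangle$ and $\bar v=0$; thus $C_p$ acts fixed-point-freely on $V$. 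For each $m\ge1$ set $N_m:=V^{\oplus m}$ and $G_m:=N_m\rtimes H$; then $C_p$ still acts fixed-point-freely on $N_m$, each $G_m$ is solvable, and $|G_m|=q^{(p-1)m}\,pe$, so the $G_m$ are pairwise non-isomorphic.

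It then remains to verify the three structural conditions for $G=G_m$. Being abelian and normal, $N:=N_m$ lies in both $F(G)$ and $O_{p'}(G)$, and $NP$ is a Frobenius group because $P\cong C_p$ acts fixed-point-freely on $N$. For the reverse inclusions I would use that the only non-trivial nilpotent normal subgroup of $H$ is $C_p$ and that $H$ has no non-trivial normal $p'$-subgroup, both of which follow from the kernel $C_p$ being self-centralising in the Frobenius group $H$. If $F(G)$ strictly contained $N$, then $F(G)/N$ would be a non-trivial nilpotent normal subgroup of $G/N\cong H$, hence equal to $C_p$, forcing $F(G)=NP$; but $NP$ is Frobenius and not nilpotent, a contradiction, so $F(G)=N$. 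Likewise the image of $O_{p'}(G)$ in $H$ is a normal $p'$-subgroup, hence trivial, giving $O_{p'}(G)=N$. With $O_{p'}(G)=F(G)=N$, with $NP$ Frobenius and $G/N\cong H$, Lemma~\ref{e1} yields $|\Irr_{p'}(G)|=2\sqrt{p-1}$, completing the family.

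The only genuinely delicate point is the construction of the module: one cannot take $N$ to be a single cyclic group $C_q$, because $H$ acts on $C_q$ through the cyclic group $\Aut(C_q)$ and hence through its abelianisation $H/C_p\cong C_e$, which would force $C_p$ to act trivially rather than fixed-point-freely. This is exactly why one passes to the $(p-1)$-dimensional augmentation module, on which $C_p$ can act without fixed points while $H$ still acts; once this module is in hand, the remaining verifications are routine.
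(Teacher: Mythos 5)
Your proof is correct, but it takes a genuinely different route to producing the infinite family. The paper fixes the module dimension at $m=\sqrt{p-1}$ and varies the \emph{characteristic}: it embeds $H=C_p\rtimes C_m$ into $\Gamma{\rm L}_1(r^m)\cong C_{r^m-1}\rtimes C_m$ acting on $V=\mathbb{F}_{r^m}$ viewed as an $m$-dimensional $\mathbb{F}_r$-space, and gets infinitely many examples $V\rtimes H$ by letting the prime $r$ run over an arithmetic progression via Dirichlet's theorem; it then concludes, exactly as you do, by the right-to-left direction of Lemma~\ref{e1}. You instead fix a single prime $q\ne p$, take the $(p-1)$-dimensional augmentation quotient $V$ of the permutation module $\mathbb{F}_q[C_p]\cong\mathbb{F}_q[H/C_e]$, and vary the \emph{multiplicity}, $N_m=V^{\oplus m}$. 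Your approach buys complete independence from number theory: no Dirichlet, and no congruence condition on the auxiliary prime, with fixed-point-freeness of $C_p$ reduced to a two-line augmentation computation (valid precisely because $p\ne 0$ in $\mathbb{F}_q$, which you correctly require). This is a real advantage, since the paper's construction is sensitive to the choice of $r$: for $y\in C_m$ to act on the order-$p$ subgroup $\langle x\rangle$ by $x\mapsto x^r$ without fixed points one needs $r$ to have multiplicative order exactly $m$ modulo $p$, whereas the progression $r\equiv 1\pmod p$ as literally written in the paper would make $x^r=x$, so the paper's verification that $C_A(P)=P$ (argued via ``$r^m-1$ does not divide $r^s-1$'', which is the condition for a field generator, not for an element of order $p$) needs this repair --- your route sidesteps the issue entirely. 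The paper's construction buys smaller examples (modules of dimension $\sqrt{p-1}$ rather than $p-1$) and a concrete realization of $H$ inside a semilinear group. One further point in your favour: you actually verify $O_{p'}(G)=F(G)=N$ and $F(G)P$ Frobenius using that the only nontrivial nilpotent normal subgroup of $H$ is its self-centralizing kernel $C_p$; the paper asserts these equalities without proof, and your argument (including the observation that a single $C_q$ cannot work because $\Aut(C_q)$ is abelian and so would kill $C_p=[H,H]$) is sound and fills in that routine but nonempty verification.
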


\begin{proof}
By Dirichlet's theorem on arithmetic progressions there are infinitely many
primes $r$ of the form $pn + 1$ where $n$ is an integer. Pick such an $r$
and set $m:=\sqrt{p-1}$. Let $V$ be an $m$-dimensional vector space over the
field with $r$ elements. Then $\Gamma$L$(V)$ contains a subgroup
$\Gamma$L$_1(r^m) \cong C_{r^m-1} \rtimes C_m$. Since $p$ divides
$r^m-1$, this former group contains a (unique) subgroup $A$ of the form
$C_{p} \rtimes C_m$. We claim that $C_{A}(P) = P$ where $P$ is the Sylow
$p$-subgroup of $A$. Let $x$ be a generator of $P$ and let $y$ be a generator
of a cyclic subgroup of order $m$ in $A$ so that $x^y = x^r$. We have to show
that whenever $s$ is an integer with $1 \leq s < m$, then $x^{r^s} \not= x$.
But this is clear since $r^m-1$ does not divide $r^{s}-1$.

Now set $G = V\rtimes A$. Then $O_{p'}(G) = F(G) = V$, $VP$ is a Frobenius
group, and $G/V = A$ is a Frobenius group of the form $C_{p} \rtimes C_m$.
Now apply Lemma~\ref{e1}.
\end{proof}


\end{document}